\definecolor{linkblue}{HTML}{003d73}
\definecolor{linkgreen}{HTML}{006161}
\definecolor{linkred}{HTML}{a11950}
\newcolumntype{L}[1]{>{\raggedright\let\newline\\\arraybackslash\hspace{0pt}}m{#1}}
\newcolumntype{C}[1]{>{\centering\let\newline\\\arraybackslash\hspace{0pt}}m{#1}}
\newcolumntype{R}[1]{>{\raggedleft\let\newline\\\arraybackslash\hspace{0pt}}m{#1}}
\let\mcnewpage=\newpage
\newcommand{\TrickSupertabularIntoMulticols}{%
\renewcommand\newpage{%
    \if@firstcolumn%
        \hrule width\linewidth height0pt%
            \columnbreak%
        \else%
          \mcnewpage%
        \fi%
}%
}
\newtheorem{theorem}{Theorem}
\newtheorem{corollary}[theorem]{Corollary}
\theoremstyle{definition}
\newtheorem{conjecture}[theorem]{Conjecture}
\newcommand{\R}{\mathbb{R}}
\newcommand{\stick}{\operatorname{stick}}
\newcommand{\eqstick}{\operatorname{eqstick}}
\newcommand{\bridge}{\operatorname{b}}
\newcommand{\superbridge}{\operatorname{sb}}
\newcommand{\crossing}{\operatorname{cr}}
\newenvironment{coordinates}[1]{
	\nobreak\vfil\penalty0\vfilneg\vtop\bgroup
	\begin{center} \begin{normalsize} #1 \end{normalsize} \end{center} 

		\ttfamily \begin{tiny}\begin{center}
	}{
		\end{center}\end{tiny}\par
		\xdef\tpd{\the\prevdepth}\egroup\prevdepth=\tpd
	}
\title{All Prime Knots Through 10 Crossings Have Superbridge Index $\leq 5$}
\author{Clayton Shonkwiler}
\affil{Department of Mathematics, Colorado State University, Fort Collins, CO}
\date{}
\begin{document}

\maketitle

\begin{abstract}
	This paper gives new upper bounds on the stick numbers of the knots $9_{18}$, $10_{18}$, $10_{58}$, $10_{66}$, $10_{68}$, $10_{80}$, $10_{82}$, $10_{84}$, $10_{93}$, $10_{100}$, and $10_{152}$, as well as on the equilateral stick number of $10_{79}$. These bounds imply that the knots $10_{58}$, $10_{66}$, and $10_{80}$ have superbridge index $\leq 5$, completing the project of showing that no prime knots through 10 crossings can have superbridge index larger than 5. The current best bounds on stick number and superbridge index for prime knots through 10 crossings are given in \Cref{sec:table}.
	
\end{abstract}

The stick number and the superbridge index are two geometric knot invariants which have proven quite difficult to compute. For example, the stick number is known for only 35 of the 249 nontrivial knots through 10 crossings, and superbridge index for only 49~\cite{knotinfo,TomClay,Shonkwiler:2020gi}.

The \emph{stick number} $\stick[K]$ of a knot type $K$ is the minimum number of edges needed to construct a polygonal realization of $K$~\cite{Randell:1994bx}, and the \emph{equilateral stick number} $\eqstick[K]$ is the minimum number of edges needed when we require all edges of the polygonal realization to have the same length. These are elementary invariants of knots which give some measure of the geometric (rather than purely topological) complexity of a knot.

Another elementary invariant of knots is the superbridge index introduced by Kuiper~\cite{Kuiper:1987ki}. The \emph{superbridge number} $\superbridge(\gamma)$ of a (tamely embedded) closed curve $\gamma$ is the maximum number of local maxima of any projection of $\gamma$ to a line, and the \emph{superbridge index} $\superbridge[K]$ of a knot type $K$ is the minimum superbridge number of any realization of $K$.

The key relationship between stick number and superbridge index is due to Jin~\cite{Jin:1997da}: $\superbridge[K] \leq \frac{1}{2} \stick[K]$, because there can't be more critical points in any projection of a polygonal curve to a line than there are vertices along the curve.

While there are a few general bounds on stick number~\cite{Negami:1991gb,Huh:2011co,Calvo:2001gv,Huh:2011gp}, the typical way of producing upper bounds on the stick number of a particular knot type is by finding examples~\cite{Meissen:1998wu,Millett:1994fo,Millett:2000fe,Calvo:1998kr,Scharein:1998tu,Rawdon:2002wj,Eddy:2019uj,TomClay}. In turn, such a bound on stick number provides, via Jin's result, a bound on superbridge index, though in general this bound can be arbitrarily bad: while there are only finitely many knots $K$ with $\stick[K] \leq n$ for any $n$~\cite{Calvo:2001gv,Negami:1991gb} (for example, there are exactly 11 knots with stick number $\leq 8$~\cite{Calvo:1998uj,Calvo:2002iq}), there are infinitely many knots with superbridge index equal to 4 (including all $(2,p)$-torus knots~\cite{Kuiper:1987ki}).

The goal here is to provide new upper bounds on the stick numbers of 12 of the 9- and 10-crossing knots:

\begin{figure}[t]
	\centering
		\subfloat[$9_{18}$]{\includegraphics[height=2in]{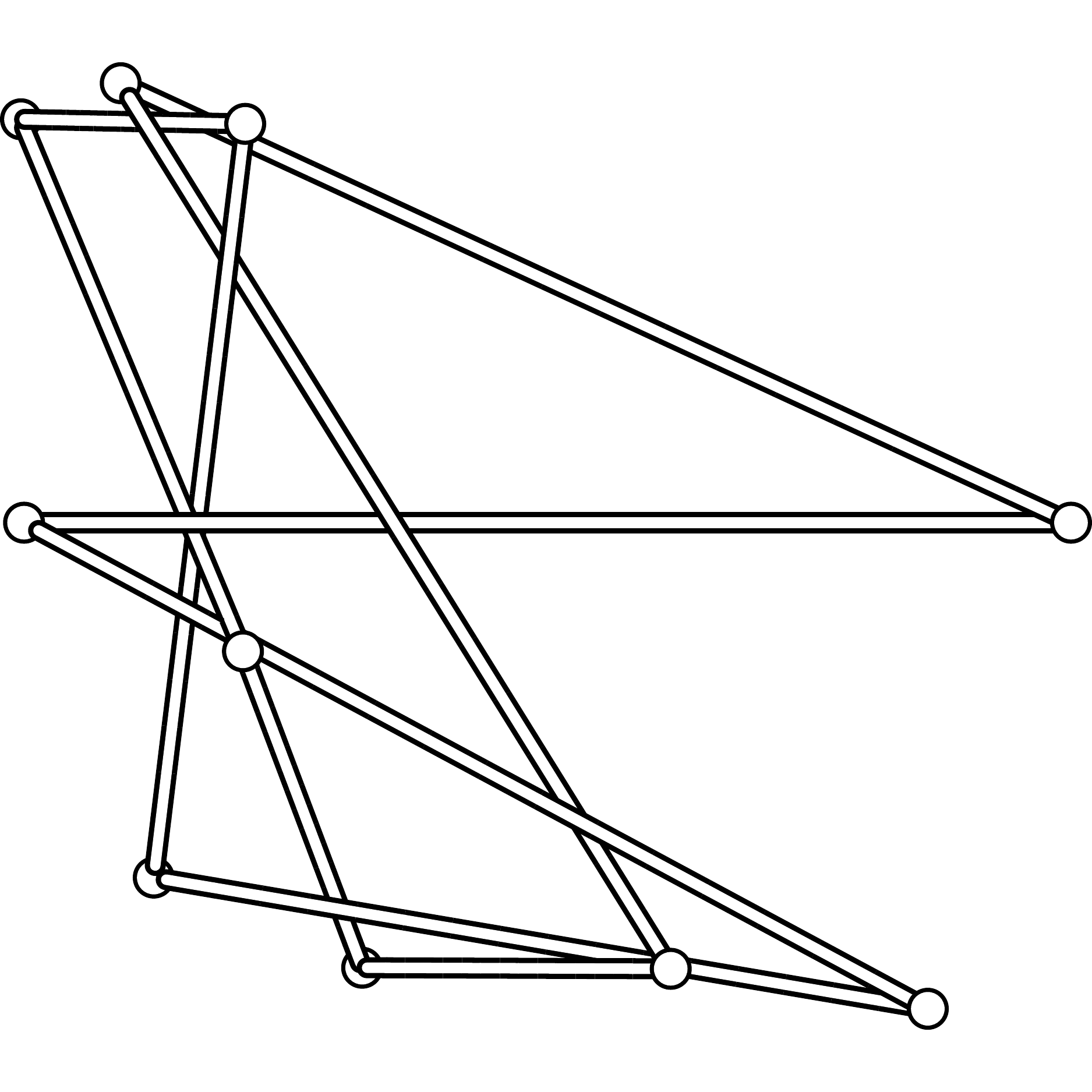}}\qquad \qquad
		\subfloat[$10_{79}$]{\includegraphics[height=2in]{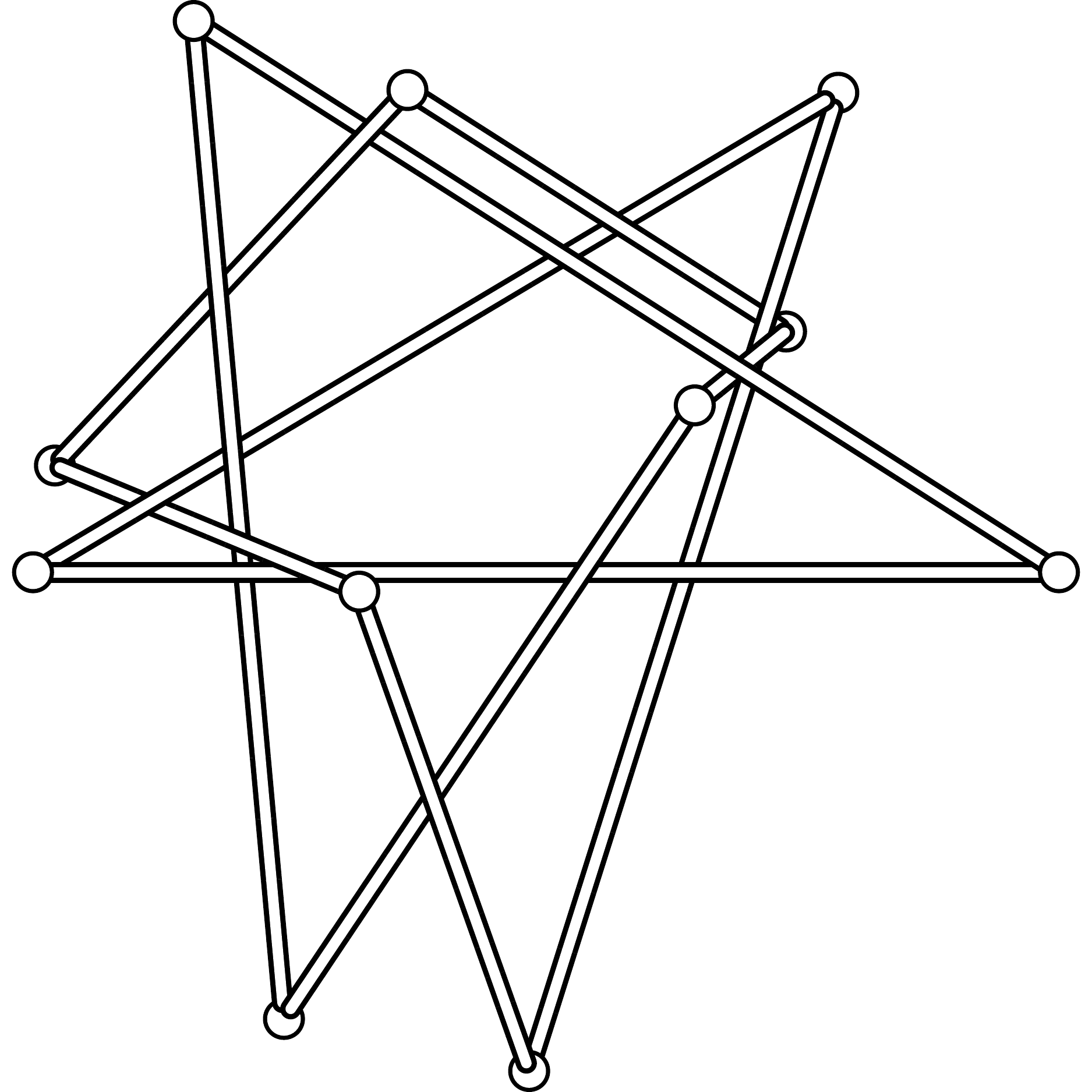}}
	\caption{An equilateral 10-stick $9_{18}$ and an equilateral 11-stick $10_{79}$. The vertex coordinates are given in \Cref{sec:coords}, and the knots are shown in orthographic perspective, viewed from the direction of the positive $z$-axis.}
	\label{fig:examples}
\end{figure}

\begin{theorem}\label{thm:main}
	The knots $9_{18}$, $10_{18}$, $10_{68}$, $10_{82}$, $10_{84}$, $10_{93}$, $10_{100}$, and $10_{152}$ have stick number and equilateral stick number $\leq 10$ and the knots $10_{58}$, $10_{66}$, $10_{79}$, and $10_{80}$ have stick number and equilateral stick number $\leq 11$.
\end{theorem}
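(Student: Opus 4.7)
The plan is to prove \Cref{thm:main} by explicit construction. For each of the twelve knot types listed, I would exhibit the vertex coordinates of a specific closed equilateral polygon in $\R^3$ with the stated edge count ($10$ or $11$) and of the target knot type. \Cref{fig:examples} already displays two such examples, and the coordinates for all twelve are deferred to \Cref{sec:coords}. Because $\stick[K] \leq \eqstick[K]$ for every knot type $K$, exhibiting equilateral examples gives both the stick and equilateral-stick bounds simultaneously, and the superbridge conclusion $\superbridge[K] \leq 5$ for $10_{58}$, $10_{66}$, and $10_{80}$ then follows immediately from Jin's inequality $\superbridge[K] \leq \frac{1}{2}\stick[K]$ applied to the corresponding $11$-stick realizations.

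The real content of the argument is finding these polygons, and my approach would be random search on the space of closed equilateral $n$-gons. I would draw large numbers of samples from a natural probability measure on $n$-gon space (for $n = 10$ and $n = 11$) using an action-angle/symplectic sampler, and for each sample extract a planar diagram by projecting to a generic direction, recording crossings and over/under information from the $z$-coordinates. Computing a polynomial invariant such as the HOMFLY-PT polynomial and matching against the KnotInfo database filters the samples down to candidate realizations of each target knot. Promising candidates would then be refined --- for instance by gradient flow on an energy that penalizes deviation from equal edge length, subject to maintaining the isotopy class --- until the vertex coordinates are exactly (or very nearly exactly) equilateral and can be reported in closed form.

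Verification of a candidate reduces to checking, for the reported coordinates, that all edges have equal length and that the polygon closes (an elementary arithmetic check), together with confirming the knot type. The latter is made rigorous by computing enough independent invariants (HOMFLY, Kauffman, Khovanov homology, hyperbolic volume of the complement, etc.) to single out a unique KnotInfo entry; alternatively, one can produce an explicit planar projection from the coordinates and reduce it via Reidemeister moves to a minimal-crossing diagram for the target knot.

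The main obstacle is, by a wide margin, the search itself. For the harder knots on the list --- especially the four realized at $11$ sticks, and the more topologically complex $10$-stick examples --- the probability that a uniformly random equilateral $n$-gon at the minimal admissible edge count falls in the desired knot type is extremely small, so naive random sampling would be computationally infeasible. A successful attack would combine large-scale sampling with variance-reduction or biased-sampling strategies that over-represent knotted conformations, together with local refinement of near-miss candidates, to eventually produce exact equilateral realizations of each of the twelve knots. Once such a realization is in hand, the remaining verification is essentially mechanical.
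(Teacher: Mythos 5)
Your overall strategy is the same as the paper's: find explicit realizations by large-scale random sampling of closed equilateral polygons in confinement, identify knot types via invariants, and report coordinates in \Cref{sec:coords}. That part is fine, and your observation that equilateral examples yield both bounds at once (since $\stick[K] \leq \eqstick[K]$) matches the paper.

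However, there is one genuine gap in your verification step. You say that checking a candidate ``reduces to checking\dots that all edges have equal length,'' and you hedge with ``exactly (or very nearly exactly) equilateral.'' For coordinates reported in double-precision floating point --- which is what any such computation actually produces --- the edge lengths will \emph{not} be exactly equal, and ``very nearly equilateral'' does not by itself prove anything about $\eqstick[K]$: a priori, forcing the edges to become exactly equal could change the knot type. Your proposed fix (gradient flow toward equilaterality ``subject to maintaining the isotopy class'') is circular as stated, since maintaining the isotopy class while reaching an exactly equilateral configuration is precisely what needs to be proved. The paper closes this gap with a quantitative perturbation result of Millett and Rawdon: if the edge lengths $L_1,\dots,L_n \approx 1$ and the minimum distance $\mu$ between non-adjacent edges satisfy $|L_i - 1| < \min\left\{\frac{\mu}{n}, \frac{\mu^2}{4}\right\}$ for all $i$, then a true equilateral realization of the same knot type exists. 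The reported configurations satisfy this criterion with a margin of about $10^{-2.96}$, which is what rigorously converts the numerical examples into bounds on equilateral stick number. Without this (or some equivalent certified-perturbation argument), your proof establishes the stick number bounds but not the equilateral stick number bounds.
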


\begin{proof}
	The result follows by exhibiting polygonal examples of each of the knots with equal-length edges. Coordinates of these realizations are given in \Cref{sec:coords} along with pictures of each knot; the pictures of the 10-stick $9_{18}$ and the 11-stick $10_{79}$ are reproduced in \Cref{fig:examples}. These examples were found among 130 billion random 10- and 11-gons generated in tight confinement using the algorithms and code from~\cite{TomClay,stick-knot-gen}; this took approximately 35,000 core-hours on a quad-core machine with Intel Xeon E5 processors. More complete analysis of this data will appear elsewhere.
	
	Since these examples are represented by the double-precision floating point coordinates of their vertices, they are only approximately equilateral. However, a result of Millett and Rawdon~\cite{Millett:2003kl} guarantees the existence of a true equilateral realization of a knot provided the edgelengths $L_1, \dots , L_n \approx 1$ and minimum distance $\mu$ between non-adjacent edges in an approximately equilateral realization satisfy 
	\[
		|L_i - 1| < \min\left\{\frac{\mu}{n}, \frac{\mu^2}{4}\right\}
	\]
	for all $i=1,\dots , n$. For all of the knots recorded in \Cref{sec:coords} we have
	\[
		|L_i - 1| < 10^{-2.96}\min\left\{\frac{\mu}{n}, \frac{\mu^2}{4}\right\},
	\]
	so the Millett--Rawdon criterion is easily satisfied and these examples also provide upper bounds on equilateral stick number.
\end{proof}

For all knots mentioned in the theorem except $10_{79}$, the stick number bound is new. It was already known~\cite{Scharein:1998tu,knotplot} that $\stick[10_{79}] \leq 11$, but the previous best bound on equilateral stick number was $12$~\cite{Rawdon:2002wj}. Since the known bounds on stick number and equilateral stick number for this knot now agree, it can be removed from Rawdon and Scharein's~\cite{Rawdon:2002wj} list of candidates for knots with unequal stick number and equilateral stick number. Indeed, combined with prior results~\cite{TomClay,Millett:2012dd}, this implies that the only knot remaining from that list is $9_{29}$, which is known to have stick number 9~\cite{Scharein:1998tu,knotplot}, but for which the best upper bound on equilateral stick number is 10. Removing $10_{79}$ from the list would seem to slightly weaken the evidence that stick number and equilateral stick number are distinct invariants, but it remains implausible to think that these two invariants will always agree.

Using Jin's bound $\superbridge[K] \leq \frac{1}{2} \stick[K]$, \Cref{thm:main} implies that all of the knots mentioned have superbridge index $\leq 5$. This result is new for the knots $10_{58}$, $10_{66}$, and $10_{80}$:

\begin{corollary}\label{cor:sb}
	The knots $10_{58}$, $10_{66}$, and $10_{80}$ have superbridge index $\leq 5$. In particular, this implies that all prime knots through 10 crossings have superbridge index $\leq 5$.
\end{corollary}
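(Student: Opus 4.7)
The plan is to deduce the first sentence directly from \Cref{thm:main} via Jin's inequality $\superbridge[K] \leq \frac{1}{2}\stick[K]$, and to handle the second sentence by combining this with existing bounds from the literature.

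First, for each $K \in \{10_{58}, 10_{66}, 10_{80}\}$, \Cref{thm:main} establishes $\stick[K] \leq 11$. Applying Jin's inequality gives $\superbridge[K] \leq 11/2$. Since the superbridge index takes only integer values, this forces $\superbridge[K] \leq 5$, which is the first assertion. Note that the bound $11$ is sharp for the purposes of this argument: a stick bound of $12$ would only yield $\superbridge[K] \leq 6$ without further work, so the strength of the conclusion depends essentially on the improvement from the random-search examples in the previous proof.

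For the ``in particular'' clause, the strategy is to observe that, prior to this paper, every prime knot through 10 crossings \emph{other than} $10_{58}$, $10_{66}$, and $10_{80}$ was already known to satisfy $\superbridge \leq 5$ --- either through a directly constructed low-superbridge realization or by combining Jin's bound with a known stick-number bound recorded in sources like \cite{knotinfo,TomClay,Shonkwiler:2020gi}. The three new cases just established fill in exactly the missing entries, so the two ingredients together cover all 249 nontrivial prime knots through 10 crossings.

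The main obstacle here is not mathematical but organizational: one must verify, knot by knot through the tables, that the union of the previously known bounds and the three new bounds really does cover every prime knot through 10 crossings, with no omission. I would handle this by appealing to the consolidated summary table that the abstract promises in \Cref{sec:table}, letting that table carry the bookkeeping and reducing the written proof to the three-line application of Jin's inequality above.
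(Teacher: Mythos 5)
Your treatment of the first sentence is exactly the paper's: apply Jin's inequality $\superbridge[K] \leq \frac{1}{2}\stick[K]$ to the new bound $\stick[K]\leq 11$ from \Cref{thm:main} and use integrality to get $\superbridge[K]\leq 5$ for $10_{58}$, $10_{66}$, and $10_{80}$. That part is complete and correct, and your remark that a stick bound of $12$ would only give $\superbridge\leq 6$ is precisely the right thing to be worried about.

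The gap is in the ``in particular'' clause, and it is mathematical, not merely organizational. If you actually run the knot-by-knot check you propose to delegate to the table, you find that two knots, $10_{37}$ and $10_{76}$, have best-known stick number bound $12$ (from \cite{Rawdon:2002wj,TomClay}), so for these two the mechanism ``Jin's bound plus a known stick-number bound'' yields only $\superbridge\leq 6$ and the argument stalls. The paper closes these two cases with specific additional inputs that your proof never names: for $10_{37}$, which is a $2$-bridge knot, the bound $\superbridge[K]\leq 3\bridge[K]-1$ of Adams et al.~\cite{Adams:2020vm} gives $\superbridge[10_{37}]\leq 3\cdot 2 - 1 = 5$; for $10_{76}$, prior work~\cite{Shonkwiler:2020gi} exhibits an explicit $12$-stick realization whose superbridge number is certified to be $5$. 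Your catch-all phrase ``a directly constructed low-superbridge realization'' could in principle cover these, but your citation list omits \cite{Adams:2020vm}, and framing the remainder as pure bookkeeping obscures the fact that the bookkeeping \emph{fails} for exactly two knots unless you invoke these two recent, non-stick-number results. A complete proof must identify $10_{37}$ and $10_{76}$ as the exceptional cases and supply these arguments explicitly.
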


\begin{proof}
	As mentioned, the first sentence is an immediate consequence of \Cref{thm:main} and Jin's bound. The second sentence follows because all other 10-crossing knots were previously known to have superbridge index $\leq 5$. Indeed, \Cref{thm:main} implies that, with at most two possible exceptions, all 10-crossing knots have stick number $\leq 11$. The two exceptions are $10_{37}$ and $10_{76}$, both of which have been realized with 12 sticks~\cite{Rawdon:2002wj,TomClay}. While Jin's bound does not imply these knots have superbridge index $\leq 5$, such a bound has recently been proved (by very different methods) for both knots: since $10_{37}$ is a 2-bridge knot, the result $\superbridge[10_{37}] \leq 5$ is a consequence of the bound $\superbridge[K] \leq 3\bridge[K]-1$  proved by Adams et al.~\cite{Adams:2020vm}, where $\bridge[K]$ is the bridge index of the knot $K$;\footnote{For completeness, an explicit realization of $10_{37}$ with superbridge number equal to 5 is given in \Cref{sec:10_37}.} the result $\superbridge[10_{76}]\leq 5$ was proved in prior work~\cite{Shonkwiler:2020gi} by finding an explicit 12-stick realization of $10_{76}$ with superbridge number equal to 5.
\end{proof}

\Cref{cor:sb}, together with what was already known about the superbridge index of knots through 10 crossings (as recorded in \Cref{sec:table}), provides modest evidence for the following conjecture:

\begin{conjecture}\label{conj}
	For a knot $K$ with crossing number $\crossing[K] \geq 7$, 
	\begin{equation}\label{eq:conj}
		\superbridge[K] \leq \left\lceil \frac{\crossing[K]}{2} \right\rceil.
	\end{equation}
\end{conjecture}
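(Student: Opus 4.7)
The plan is to split by crossing number range. For $7 \leq \crossing[K] \leq 10$ the conjecture is exactly what \Cref{cor:sb} combined with \Cref{sec:table} establishes, so the base cases are in hand; a natural first step is to extend the tables by one or two more crossings using the same random-polygon search that underlies \Cref{thm:main}, both to sharpen confidence in the conjecture and to reduce the number of higher-crossing cases that would need separate treatment.

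For larger $\crossing[K]$ the single most useful structural lever is the Adams et al.\ inequality $\superbridge[K] \leq 3\bridge[K]-1$. This already settles the conjecture whenever $\bridge[K] \leq (\lceil \crossing[K]/2\rceil + 1)/3$, and hence for all $2$-bridge knots with $\crossing[K] \geq 7$ (and for $3$-bridge knots once $\crossing[K] \geq 15$, etc.). I would first enumerate the families covered this way — $2$-bridge knots, Montesinos knots with few rational tangles, cables and plats of small bridge number — and reduce the problem to knots of comparatively large bridge index relative to crossing number.

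For the remaining knots I would try to push Jin's bound $\superbridge[K] \leq \frac{1}{2}\stick[K]$ by combining the general stick-number estimates of \cite{Negami:1991gb,Calvo:2001gv,Huh:2011co,Huh:2011gp} with tighter bounds tailored to structural subclasses and with explicit polygonal constructions at the boundary. A bound of the shape $\stick[K] \leq \crossing[K] + O(1)$, if available on the relevant subclass, would immediately imply the conjecture via Jin.

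The main obstacle will be the alternating case. For alternating knots both $\bridge[K]$ and $\stick[K]$ are known (or conjectured) to grow essentially linearly in $\crossing[K]$, so neither the Adams et al.\ route nor Jin's bound alone is strong enough. A genuine proof will almost certainly have to exploit the combinatorial structure of reduced alternating diagrams to construct, for each alternating $K$, an explicit polygonal realization with at most $2\lceil \crossing[K]/2\rceil$ vertices that also has at most $\lceil \crossing[K]/2\rceil$ local maxima in some direction. Producing such a construction uniformly, rather than verifying it case by case, is the real content of the conjecture and is where I expect any argument to have to invest the bulk of its new ideas.
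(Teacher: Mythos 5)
The statement you are trying to prove is \Cref{conj}, which the paper explicitly presents as a \emph{conjecture}: it offers no proof, only evidence, namely (i) the verification for all prime knots through 10 crossings via \Cref{cor:sb} and the table in \Cref{sec:table}, and (ii) the verification for the one infinite family where $\superbridge$ is known, the torus knots, using Kuiper's formula $\superbridge[T_{p,q}] = \min\{2p,q\}$ together with Murasugi's $\crossing[T_{p,q}] = q(p-1)$. Your proposal correctly reproduces evidence (i), but it is not a proof of the statement, and you say as much yourself: the alternating/high-bridge-index case is left entirely open, and you identify it as ``where I expect any argument to have to invest the bulk of its new ideas.'' That is the genuine gap --- the entire content of the conjecture beyond 10 crossings remains unestablished. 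A research program with the hard case deferred cannot be spliced in as a proof of \eqref{eq:conj}.

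Two smaller points. First, your claim that $\superbridge[K] \leq 3\bridge[K]-1$ settles the conjecture ``for all $2$-bridge knots with $\crossing[K] \geq 7$'' contradicts your own criterion $\bridge[K] \leq (\lceil \crossing[K]/2\rceil + 1)/3$: for $\bridge[K]=2$ that criterion requires $\lceil \crossing[K]/2\rceil \geq 5$, i.e.\ $\crossing[K] \geq 9$, and indeed for a $2$-bridge knot with $7$ or $8$ crossings the Adams et al.\ bound gives only $\superbridge[K] \leq 5$ while \eqref{eq:conj} demands $\leq 4$ (those cases are instead covered by the table). Second, you omit the torus-knot verification, which is the paper's only evidence reaching arbitrarily large crossing number; if you want your write-up to match what is actually known, you should include the check that $\min\{2p,q\} \leq \lceil q(p-1)/2 \rceil$ whenever $q(p-1) \geq 7$.
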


This is somewhat stronger than the known bound $\superbridge[K] \leq \frac{3}{4}(\crossing[K]+1)$, which follows from Jin's bound and Huh and Oh's~\cite{Huh:2011co} result $\stick[K] \leq \frac{3}{2}(\crossing[K]+1)$.

\Cref{conj} holds for the one infinite family of knots for which superbridge index is known, namely the torus knots $T_{p,q}$. Assuming $2\leq p < q$, Kuiper~\cite{Kuiper:1987ki} proved that $\superbridge[T_{p,q}] = \min\{2p,q\}$, Murasugi~\cite{murasugi_braid_1991} showed that $\crossing[T_{p,q}] = q(p-1)$, and it is straightforward to verify that these quantities satisfy~\eqref{eq:conj} so long as $7 \leq \crossing[T_{p,q}] = q(p-1)$.

\subsection*{Acknowledgments} 
\label{sub:acknowledgments}

Thanks, as always, to Jason Cantarella and Tom Eddy for their mathematical and computational knowledge and to Allison Moore and Chuck Livingston for maintaining KnotInfo~\cite{knotinfo}. This work was partially supported by grants from the Simons Foundation (\#354225 and \#709150).

\clearpage 

\appendix

\section{Stick Number and Superbridge Index Bounds}\label{sec:table}

Bounds on stick number and superbridge index for prime knots through 10 crossings are given below, including references for where these results were proved. If an exact value is not known, the possible values, as determined by known upper and lower bounds, are given in the form of an interval; e.g., the entry $[9,11]$ for $\stick[9_6]$ means that $9 \leq \stick[9_6] \leq 11$.

The citations given in the table are mostly references for the upper bounds, since the lower bounds always come from the same sources: for unknown stick numbers the lower bound always comes from Calvo's characterization of knots with stick number $\leq 8$~\cite{Calvo:1998uj,Calvo:2002iq} and the lower bound on superbridge index always comes from either Kuiper's result $\bridge[K]<\superbridge[K]$ for nontrivial knots~\cite{Kuiper:1987ki} or Jeon and Jin's characterization of possible 3-superbridge knots~\cite{Jeon:2002gm}. 

The bounds for stick number and equilateral stick number agree for all knots in the table except $9_{29}$, which has $\stick[9_{29}]=9$ and $\eqstick[9_{29}] \leq 10$. This stick number is marked by an asterisk in the table as a reminder.

\Crefname{theorem}{Thm.}{Thms.}
\Crefname{corollary}{Cor.}{Cors.}

\setlength{\tabcolsep}{6pt}

\begin{center}
\begin{multicols*}{2}

\begin{tabular*}{.35\textwidth}{llll} \label{tab:stick numbers}
\!\!$K$ & $\stick[K]$ & $\superbridge[K]$ \\
\midrule
$ 0_{1} $ & 3 & 1 & \\
$ 3_{1} $ & 6 & 3 & \cite{Randell:1994bx, Kuiper:1987ki} \\
$ 4_{1} $ & 7 & 3 & \cite{Jin:1997da, Randell:1994bx} \\
$ 5_{1} $ & 8 & 4 & \cite{Negami:1991gb, Meissen:1998wu, Kuiper:1987ki} \\
$ 5_{2} $ & 8 & $[3,4]$ & \cite{Jin:1997da} \\
$ 6_{1} $ & 8 & $[3,4]$ & \cite{Negami:1991gb, Meissen:1998wu, Jin:1997da} \\
$ 6_{2} $ & 8 & $[3,4]$ & \cite{Negami:1991gb, Meissen:1998wu, Jin:1997da} \\
$ 6_{3} $ & 8 & $[3,4]$ & \cite{Negami:1991gb, Meissen:1998wu, Jin:1997da} \\
\midrule
$ 7_{1} $ & 9 & 4 & \cite{Kuiper:1987ki}\\
$ 7_{2} $ & 9 & $[3,4]$ & \cite{Meissen:1998wu, Jin:1997da}\\
$ 7_{3} $ & 9 & $[3,4]$ & \cite{Meissen:1998wu, Jin:1997da}\\
$ 7_{4} $ & 9 & $[3,4]$ & \cite{Meissen:1998wu, Jin:1997da}\\
$ 7_{5} $ & 9 & 4 & \cite{Meissen:1998wu, Jin:1997da}\\
$ 7_{6} $ & 9 & 4 & \cite{Meissen:1998wu, Jin:1997da}\\
$ 7_{7} $ & 9 & 4 & \cite{Meissen:1998wu, Jin:1997da}\\
\midrule
$ 8_{1} $ & $[9,10]$ & 4 & \cite{Rawdon:2002wj, Shonkwiler:2020gi} \\
$ 8_{2} $ & $[9,10]$ & 4 & \cite{Rawdon:2002wj, Shonkwiler:2020gi} \\
$ 8_{3} $ & $[9,10]$ & 4 & \cite{Rawdon:2002wj, Shonkwiler:2020gi} \\
$ 8_{4} $ & $[9,10]$ & $[3,4]$ & \cite{Rawdon:2002wj, Shonkwiler:2020gi} \\
$ 8_{5} $ & $[9,10]$ & 4 & \cite{Rawdon:2002wj, Shonkwiler:2020gi} \\
$ 8_{6} $ & $[9,10]$ & 4 & \cite{Rawdon:2002wj, Shonkwiler:2020gi} \\
$ 8_{7} $ & $[9,10]$ & 4 & \cite{Rawdon:2002wj, Shonkwiler:2020gi} \\
$ 8_{8} $ & $[9,10]$ & 4 & \cite{Rawdon:2002wj, Shonkwiler:2020gi} \\
$ 8_{9} $ & $[9,10]$ & $[3,4]$ & \cite{Rawdon:2002wj, Shonkwiler:2020gi} \\
$ 8_{10} $ & $[9,10]$ & 4 & \cite{Rawdon:2002wj, Shonkwiler:2020gi} \\
$ 8_{11} $ & $[9,10]$ & 4 & \cite{Rawdon:2002wj, Shonkwiler:2020gi} \\
$ 8_{12} $ & $[9,10]$ & 4 & \cite{Rawdon:2002wj, Shonkwiler:2020gi} \\
$ 8_{13} $ & $[9,10]$ & 4 & \cite{Rawdon:2002wj, Shonkwiler:2020gi} \\
\end{tabular*}

\columnbreak

\begin{tabular*}{.35\textwidth}{llll}
$K$ & $\stick[K]$ & $\superbridge[K]$ \\
\midrule
$ 8_{14} $ & $[9,10]$ & 4 & \cite{Rawdon:2002wj, Shonkwiler:2020gi} \\
$ 8_{15} $ & $[9,10]$ & 4 & \cite{Rawdon:2002wj, Shonkwiler:2020gi} \\
$ 8_{16} $ & 9 & 4 & \cite{Rawdon:2002wj, Jin:1997da} \\
$ 8_{17} $ & 9 & 4 & \cite{Rawdon:2002wj, Jin:1997da} \\
$ 8_{18} $ & 9 & 4 &  \cite{Calvo:2001gv, Rawdon:2002wj, Jin:1997da} \\
$ 8_{19} $ & 8 & 4 & \cite{Adams:1997gb, Jin:1997da, Millett:2012dd, Kuiper:1987ki} \\
$ 8_{20} $ & 8 & 4 & \cite{Negami:1991gb, Meissen:1998wu, Jin:1997da} \\
$ 8_{21} $ & 9 & 4 & \cite{Meissen:1998wu, Jin:1997da}\\
\midrule
$ 9_{1} $ & $[9,10]$ & 4 & \cite{Rawdon:2002wj, Kuiper:1987ki} \\
$ 9_{2} $ & $[9,10]$ & $[4,5]$ & \cite{TomClay, Jin:1997da} \\
$ 9_{3} $ & $[9,10]$ & $[4,5]$ & \cite{TomClay, Jin:1997da} \\
$ 9_{4} $ & $[9,10]$ & $[4,5]$ & \cite{Rawdon:2002wj, Jin:1997da} \\
$ 9_{5} $ & $[9,10]$ & $[4,5]$ & \cite{Rawdon:2002wj, Jin:1997da} \\
$ 9_{6} $ & $[9,11]$ & $[4,5]$ & \cite{Rawdon:2002wj, Jin:1997da} \\
$ 9_{7} $ & $[9,10]$ & 4 & \cite{Rawdon:2002wj, Shonkwiler:2020gi} \\
$ 9_{8} $ & $[9,10]$ & $[4,5]$ & \cite{Rawdon:2002wj, Jin:1997da} \\
$ 9_{9} $ & $[9,10]$ & $[4,5]$ & \cite{Rawdon:2002wj, Jin:1997da} \\
$ 9_{10} $ & $[9,10]$ & $[4,5]$ & \cite{Rawdon:2002wj, Jin:1997da} \\
$ 9_{11} $ & $[9,10]$ & $[4,5]$ & \cite{TomClay, Jin:1997da} \\
$ 9_{12} $ & $[9,10]$ & $[4,5]$ & \cite{Rawdon:2002wj, Jin:1997da} \\
$ 9_{13} $ & $[9,10]$ & $[4,5]$ & \cite{Rawdon:2002wj, Jin:1997da} \\
$ 9_{14} $ & $[9,10]$ & $[4,5]$ & \cite{Rawdon:2002wj, Jin:1997da} \\
$ 9_{15} $ & $[9,10]$ & $[4,5]$ & \cite{TomClay, Jin:1997da} \\
$ 9_{16} $ & $[9,10]$ & 4 & \cite{Rawdon:2002wj, Shonkwiler:2020gi} \\
$ 9_{17} $ & $[9,10]$ & $[4,5]$ & \cite{Rawdon:2002wj, Jin:1997da} \\
$ 9_{18} $ & $[9,10]$ & $[4,5]$ & \Cref{thm:main}, \cite{Jin:1997da} \\
$ 9_{19} $ & $[9,10]$ & $[4,5]$ & \cite{Rawdon:2002wj, Jin:1997da} \\
$ 9_{20} $ & $[9,10]$ & 4 & \cite{Rawdon:2002wj, Shonkwiler:2020gi} 
\end{tabular*}
\end{multicols*}

\begin{multicols*}{2}
\TrickSupertabularIntoMulticols

\tablefirsthead{
	$K$ & $\stick[K]$ & $\superbridge[K]$ \\
	\midrule
}
\tablehead{
	$K$ & $\stick[K]$ & $\superbridge[K]$ \\
	\midrule
}
\tablelasttail{\bottomrule}
\begin{supertabular*}{.35\textwidth}{llll}
$ 9_{21} $ & $[9,10]$ & $[4,5]$ & \cite{TomClay, Jin:1997da} \\
$ 9_{22} $ & $[9,10]$ & $[4,5]$ & \cite{Rawdon:2002wj, Jin:1997da} \\
$ 9_{23} $ & $[9,11]$ & $[4,5]$ & \cite{Rawdon:2002wj, Jin:1997da} \\
$ 9_{24} $ & $[9,10]$ & $[4,5]$ & \cite{Rawdon:2002wj, Jin:1997da} \\
$ 9_{25} $ & $[9,10]$ & $[4,5]$ & \cite{TomClay, Jin:1997da} \\
$ 9_{26} $ & $[9,10]$ & 4 & \cite{Rawdon:2002wj, Shonkwiler:2020gi} \\
$ 9_{27} $ & $[9,10]$ & $[4,5]$ & \cite{TomClay, Jin:1997da} \\
$ 9_{28} $ & $[9,10]$ & 4 & \cite{Rawdon:2002wj, Shonkwiler:2020gi} \\
$ 9_{29} $ & 9$^\ast$ & 4 & \cite{Scharein:1998tu, Jin:1997da} \\
$ 9_{30} $ & $[9,10]$ & $[4,5]$ & \cite{Rawdon:2002wj, Jin:1997da} \\
$ 9_{31} $ & $[9,10]$ & $[4,5]$ & \cite{Rawdon:2002wj, Jin:1997da} \\
$ 9_{32} $ & $[9,10]$ & 4 & \cite{Rawdon:2002wj, Shonkwiler:2020gi} \\
$ 9_{33} $ & $[9,10]$ & 4 & \cite{Rawdon:2002wj, Shonkwiler:2020gi} \\
$ 9_{34} $ & 9 & 4 & \cite{Rawdon:2002wj, Jin:1997da} \\
$ 9_{35} $ & 9 & 4 & \cite{TomClay, Jin:1997da} \\
$ 9_{36} $ & $[9,11]$ & $[4,5]$ & \cite{Rawdon:2002wj, Jin:1997da} \\
$ 9_{37} $ & $[9,10]$ & $[4,5]$ & \cite{Rawdon:2002wj, Jin:1997da} \\
$ 9_{38} $ & $[9,10]$ & $[4,5]$ & \cite{Rawdon:2002wj, Jin:1997da} \\
$ 9_{39} $ & 9 & 4 & \cite{TomClay, Jin:1997da} \\
$ 9_{40} $ & 9 & 4 & \cite{Scharein:1998tu, Jin:1997da} \\
$ 9_{41} $ & 9 & 4 & \cite{Scharein:1998tu, Jin:1997da} \\
$ 9_{42} $ & 9 & 4 & \cite{Calvo:1998kr, Jin:1997da} \\
$ 9_{43} $ & 9 & 4 & \cite{TomClay, Jin:1997da} \\
$ 9_{44} $ & 9 & 4 & \cite{Rawdon:2002wj, Jin:1997da} \\
$ 9_{45} $ & 9 & 4 & \cite{TomClay, Jin:1997da} \\
$ 9_{46} $ & 9 & 4 & \cite{Calvo:1998kr, Jin:1997da} \\
$ 9_{47} $ & 9 & 4 & \cite{Rawdon:2002wj, Jin:1997da} \\
$ 9_{48} $ & 9 & 4 & \cite{TomClay, Jin:1997da} \\
$ 9_{49} $ & 9 & 4 & \cite{Rawdon:2002wj, Jin:1997da} \\
\midrule
$ 10_{1} $ & $[9,11]$ & $[4,5]$ & \cite{Rawdon:2002wj, Jin:1997da} \\
$ 10_{2} $ & $[9,11]$ & $[4,5]$ & \cite{Rawdon:2002wj, Jin:1997da} \\
$ 10_{3} $ & $[9,11]$ & $[4,5]$ & \cite{TomClay, Jin:1997da} \\
$ 10_{4} $ & $[9,11]$ & $[4,5]$ & \cite{Rawdon:2002wj, Jin:1997da} \\
$ 10_{5} $ & $[9,11]$ & $[4,5]$ & \cite{Rawdon:2002wj, Jin:1997da} \\
$ 10_{6} $ & $[9,11]$ & $[4,5]$ & \cite{TomClay, Jin:1997da} \\
$ 10_{7} $ & $[9,11]$ & $[4,5]$ & \cite{TomClay, Jin:1997da} \\
$ 10_{8} $ & $[9,10]$ & $[4,5]$ & \cite{TomClay, Jin:1997da} \\
$ 10_{9} $ & $[9,11]$ & $[4,5]$ & \cite{Rawdon:2002wj, Jin:1997da} \\
$ 10_{10} $ & $[9,11]$ & $[4,5]$ & \cite{TomClay, Jin:1997da} \\
$ 10_{11} $ & $[9,11]$ & $[4,5]$ & \cite{Rawdon:2002wj, Jin:1997da} \\
$ 10_{12} $ & $[9,11]$ & $[4,5]$ & \cite{Rawdon:2002wj, Jin:1997da} \\
$ 10_{13} $ & $[9,11]$ & $[4,5]$ & \cite{Rawdon:2002wj, Jin:1997da} \\
$ 10_{14} $ & $[9,11]$ & $[4,5]$ & \cite{Rawdon:2002wj, Jin:1997da} \\
$ 10_{15} $ & $[9,11]$ & $[4,5]$ & \cite{TomClay, Jin:1997da} \\
$ 10_{16} $ & $[9,10]$ & $[4,5]$ & \cite{TomClay, Jin:1997da} \\
$ 10_{17} $ & $[9,10]$ & $[4,5]$ & \cite{TomClay, Jin:1997da} \\
$ 10_{18} $ & $[9,10]$ & $[4,5]$ & \Cref{thm:main}, \cite{Jin:1997da} \\
$ 10_{19} $ & $[9,11]$ & $[4,5]$ & \cite{Rawdon:2002wj, Jin:1997da} \\
$ 10_{20} $ & $[9,11]$ & $[4,5]$ & \cite{TomClay, Jin:1997da} \\
$ 10_{21} $ & $[9,11]$ & $[4,5]$ & \cite{TomClay, Jin:1997da} \\
$ 10_{22} $ & $[9,11]$ & $[4,5]$ & \cite{TomClay, Jin:1997da} \\
$ 10_{23} $ & $[9,11]$ & $[4,5]$ & \cite{TomClay, Jin:1997da} \\
$ 10_{24} $ & $[9,11]$ & $[4,5]$ & \cite{TomClay, Jin:1997da} \\
$ 10_{25} $ & $[9,11]$ & $[4,5]$ & \cite{Rawdon:2002wj, Jin:1997da} \\
$ 10_{26} $ & $[9,11]$ & $[4,5]$ & \cite{TomClay, Jin:1997da} \\
$ 10_{27} $ & $[9,11]$ & $[4,5]$ & \cite{Rawdon:2002wj, Jin:1997da} \\
$ 10_{28} $ & $[9,11]$ & $[4,5]$ & \cite{TomClay, Jin:1997da} \\
$ 10_{29} $ & $[9,11]$ & $[4,5]$ & \cite{Rawdon:2002wj, Jin:1997da} \\
$ 10_{30} $ & $[9,11]$ & $[4,5]$ & \cite{TomClay, Jin:1997da} \\
$ 10_{31} $ & $[9,11]$ & $[4,5]$ & \cite{TomClay, Jin:1997da} \\
$ 10_{32} $ & $[9,11]$ & $[4,5]$ & \cite{Rawdon:2002wj, Jin:1997da} \\
$ 10_{33} $ & $[9,11]$ & $[4,5]$ & \cite{Rawdon:2002wj, Jin:1997da} \\
$ 10_{34} $ & $[9,11]$ & $[4,5]$ & \cite{TomClay, Jin:1997da} \\
$ 10_{35} $ & $[9,11]$ & $[4,5]$ & \cite{TomClay, Jin:1997da} \\
$ 10_{36} $ & $[9,11]$ & $[4,5]$ & \cite{Rawdon:2002wj, Jin:1997da} \\
$ 10_{37} $ & $[9,12]$ & $[4,5]$ & \cite{Rawdon:2002wj, Adams:2020vm} \\
$ 10_{38} $ & $[9,11]$ & $[4,5]$ & \cite{TomClay, Jin:1997da} \\
$ 10_{39} $ & $[9,11]$ & $[4,5]$ & \cite{TomClay, Jin:1997da} \\
$ 10_{40} $ & $[9,11]$ & $[4,5]$ & \cite{Rawdon:2002wj, Jin:1997da} \\
$ 10_{41} $ & $[9,11]$ & $[4,5]$ & \cite{Rawdon:2002wj, Jin:1997da} \\
$ 10_{42} $ & $[9,11]$ & $[4,5]$ & \cite{Rawdon:2002wj, Jin:1997da} \\
$ 10_{43} $ & $[9,11]$ & $[4,5]$ & \cite{TomClay, Jin:1997da} \\
$ 10_{44} $ & $[9,11]$ & $[4,5]$ & \cite{TomClay, Jin:1997da} \\
$ 10_{45} $ & $[9,11]$ & $[4,5]$ & \cite{Rawdon:2002wj, Jin:1997da} \\
$ 10_{46} $ & $[9,11]$ & $[4,5]$ & \cite{TomClay, Jin:1997da} \\
$ 10_{47} $ & $[9,11]$ & $[4,5]$ & \cite{TomClay, Jin:1997da} \\
$ 10_{48} $ & $[9,10]$ & $[4,5]$ & \cite{Rawdon:2002wj, Jin:1997da} \\
$ 10_{49} $ & $[9,11]$ & $[4,5]$ & \cite{Rawdon:2002wj, Jin:1997da} \\
$ 10_{50} $ & $[9,11]$ & $[4,5]$ & \cite{TomClay, Jin:1997da} \\
$ 10_{51} $ & $[9,11]$ & $[4,5]$ & \cite{TomClay, Jin:1997da} \\
$ 10_{52} $ & $[9,11]$ & $[4,5]$ & \cite{Rawdon:2002wj, Jin:1997da} \\
$ 10_{53} $ & $[9,11]$ & $[4,5]$ & \cite{TomClay, Jin:1997da} \\
$ 10_{54} $ & $[9,11]$ & $[4,5]$ & \cite{TomClay, Jin:1997da} \\
$ 10_{55} $ & $[9,11]$ & $[4,5]$ & \cite{TomClay, Jin:1997da} \\
$ 10_{56} $ & $[9,10]$ & $[4,5]$ & \cite{TomClay, Jin:1997da} \\
$ 10_{57} $ & $[9,11]$ & $[4,5]$ & \cite{TomClay, Jin:1997da} \\
$ 10_{58} $ & $[9,11]$ & $[4,5]$ & \Cref{thm:main}, \Cref{cor:sb} \\
$ 10_{59} $ & $[9,11]$ & $[4,5]$ & \cite{Rawdon:2002wj, Jin:1997da} \\
$ 10_{60} $ & $[9,11]$ & $[4,5]$ & \cite{Rawdon:2002wj, Jin:1997da} \\
$ 10_{61} $ & $[9,11]$ & $[4,5]$ & \cite{Rawdon:2002wj, Jin:1997da} \\
$ 10_{62} $ & $[9,11]$ & $[4,5]$ & \cite{TomClay, Jin:1997da} \\
$ 10_{63} $ & $[9,11]$ & $[4,5]$ & \cite{Rawdon:2002wj, Jin:1997da} \\
$ 10_{64} $ & $[9,11]$ & $[4,5]$ & \cite{TomClay, Jin:1997da} \\
$ 10_{65} $ & $[9,11]$ & $[4,5]$ & \cite{TomClay, Jin:1997da} \\
$ 10_{66} $ & $[9,11]$ & $[4,5]$ & \Cref{thm:main}, \Cref{cor:sb} \\
$ 10_{67} $ & $[9,11]$ & $[4,5]$ & \cite{Rawdon:2002wj, Jin:1997da} \\
$ 10_{68} $ & $[9,10]$ & $[4,5]$ & \Cref{thm:main}, \cite{Jin:1997da} \\
$ 10_{69} $ & $[9,11]$ & $[4,5]$ & \cite{Rawdon:2002wj, Jin:1997da} \\
$ 10_{70} $ & $[9,11]$ & $[4,5]$ & \cite{TomClay, Jin:1997da} \\
$ 10_{71} $ & $[9,11]$ & $[4,5]$ & \cite{TomClay, Jin:1997da} \\
$ 10_{72} $ & $[9,11]$ & $[4,5]$ & \cite{TomClay, Jin:1997da} \\
$ 10_{73} $ & $[9,11]$ & $[4,5]$ & \cite{TomClay, Jin:1997da} \\
$ 10_{74} $ & $[9,11]$ & $[4,5]$ & \cite{TomClay, Jin:1997da} \\
$ 10_{75} $ & $[9,11]$ & $[4,5]$ & \cite{TomClay, Jin:1997da} \\
$ 10_{76} $ & $[9,12]$ & $[4,5]$ & \cite{TomClay, Shonkwiler:2020gi} \\
$ 10_{77} $ & $[9,11]$ & $[4,5]$ & \cite{TomClay, Jin:1997da} \\
$ 10_{78} $ & $[9,11]$ & $[4,5]$ & \cite{TomClay, Jin:1997da} \\
$ 10_{79} $ & $[9,11]$ & $[4,5]$ & \cite{Jin:1997da, Scharein:1998tu} \\
$ 10_{80} $ & $[9,11]$ & $[4,5]$ & \Cref{thm:main}, \Cref{cor:sb} \\
$ 10_{81} $ & $[9,11]$ & $[4,5]$ & \cite{Rawdon:2002wj, Jin:1997da} \\
$ 10_{82} $ & $[9,10]$ & $[4,5]$ & \Cref{thm:main}, \cite{Jin:1997da} \\
$ 10_{83} $ & $[9,10]$ & $[4,5]$ & \cite{TomClay, Jin:1997da} \\
$ 10_{84} $ & $[9,10]$ & $[4,5]$ & \Cref{thm:main}, \cite{Jin:1997da} \\
$ 10_{85} $ & $[9,10]$ & $[4,5]$ & \cite{TomClay, Jin:1997da} \\
$ 10_{86} $ & $[9,11]$ & $[4,5]$ & \cite{Rawdon:2002wj, Jin:1997da} \\
$ 10_{87} $ & $[9,11]$ & $[4,5]$ & \cite{Rawdon:2002wj, Jin:1997da} \\
$ 10_{88} $ & $[9,11]$ & $[4,5]$ & \cite{Rawdon:2002wj, Jin:1997da} \\
$ 10_{89} $ & $[9,11]$ & $[4,5]$ & \cite{Rawdon:2002wj, Jin:1997da} \\
$ 10_{90} $ & $[9,10]$ & $[4,5]$ & \cite{TomClay, Jin:1997da} \\
$ 10_{91} $ & $[9,10]$ & $[4,5]$ & \cite{TomClay, Jin:1997da} \\
$ 10_{92} $ & $[9,11]$ & $[4,5]$ & \cite{Rawdon:2002wj, Jin:1997da} \\
$ 10_{93} $ & $[9,10]$ & $[4,5]$ & \Cref{thm:main}, \cite{Jin:1997da} \\
$ 10_{94} $ & $[9,10]$ & $[4,5]$ & \cite{TomClay, Jin:1997da} \\
$ 10_{95} $ & $[9,11]$ & $[4,5]$ & \cite{TomClay, Jin:1997da} \\
$ 10_{96} $ & $[9,11]$ & $[4,5]$ & \cite{Rawdon:2002wj, Jin:1997da} \\
$ 10_{97} $ & $[9,11]$ & $[4,5]$ & \cite{TomClay, Jin:1997da} \\
$ 10_{98} $ & $[9,11]$ & $[4,5]$ & \cite{Rawdon:2002wj, Jin:1997da} \\
$ 10_{99} $ & $[9,11]$ & $[4,5]$ & \cite{Rawdon:2002wj, Jin:1997da} \\
$ 10_{100} $ & $[9,10]$ & $[4,5]$ & \Cref{thm:main}, \cite{Jin:1997da} \\
$ 10_{101} $ & $[9,11]$ & $[4,5]$ & \cite{TomClay, Jin:1997da} \\
$ 10_{102} $ & $[9,10]$ & $[4,5]$ & \cite{Rawdon:2002wj, Jin:1997da} \\
$ 10_{103} $ & $[9,10]$ & $[4,5]$ & \cite{TomClay, Jin:1997da} \\
$ 10_{104} $ & $[9,10]$ & $[4,5]$ & \cite{Rawdon:2002wj, Jin:1997da} \\
$ 10_{105} $ & $[9,10]$ & $[4,5]$ & \cite{TomClay, Jin:1997da} \\
$ 10_{106} $ & $[9,10]$ & $[4,5]$ & \cite{TomClay, Jin:1997da} \\
$ 10_{107} $ & $[9,10]$ & $[4,5]$ & \cite{Scharein:1998tu, Jin:1997da} \\
$ 10_{108} $ & $[9,10]$ & $[4,5]$ & \cite{Rawdon:2002wj, Jin:1997da} \\
$ 10_{109} $ & $[9,10]$ & $[4,5]$ & \cite{Rawdon:2002wj, Jin:1997da} \\
$ 10_{110} $ & $[9,10]$ & $[4,5]$ & \cite{TomClay, Jin:1997da} \\
$ 10_{111} $ & $[9,10]$ & $[4,5]$ & \cite{TomClay, Jin:1997da} \\
$ 10_{112} $ & $[9,10]$ & $[4,5]$ & \cite{TomClay, Jin:1997da} \\
$ 10_{113} $ & $[9,10]$ & $[4,5]$ & \cite{Rawdon:2002wj, Jin:1997da} \\
$ 10_{114} $ & $[9,10]$ & $[4,5]$ & \cite{Rawdon:2002wj, Jin:1997da} \\
$ 10_{115} $ & $[9,10]$ & $[4,5]$ & \cite{TomClay, Jin:1997da} \\
$ 10_{116} $ & $[9,10]$ & $[4,5]$ & \cite{Rawdon:2002wj, Jin:1997da} \\
$ 10_{117} $ & $[9,10]$ & $[4,5]$ & \cite{TomClay, Jin:1997da} \\
$ 10_{118} $ & $[9,10]$ & $[4,5]$ & \cite{TomClay, Jin:1997da} \\
$ 10_{119} $ & $[9,10]$ & $[4,5]$ & \cite{Scharein:1998tu, Jin:1997da} \\
$ 10_{120} $ & $[9,10]$ & $[4,5]$ & \cite{Rawdon:2002wj, Jin:1997da} \\
$ 10_{121} $ & $[9,10]$ & $[4,5]$ & \cite{Rawdon:2002wj, Jin:1997da} \\
$ 10_{122} $ & $[9,10]$ & $[4,5]$ & \cite{Rawdon:2002wj, Jin:1997da} \\
$ 10_{123} $ & $[9,11]$ & $[4,5]$ & \cite{Rawdon:2002wj, Jin:1997da} \\
$ 10_{124} $ & 10 & 5 & \cite{Adams:1997gb, Jin:1997da, Kuiper:1987ki} \\
$ 10_{125} $ & $[9,10]$ & $[4,5]$ & \cite{Rawdon:2002wj, Jin:1997da} \\
$ 10_{126} $ & $[9,10]$ & $[4,5]$ & \cite{TomClay, Jin:1997da} \\
$ 10_{127} $ & $[9,10]$ & $[4,5]$ & \cite{Rawdon:2002wj, Jin:1997da} \\
$ 10_{128} $ & $[9,10]$ & $[4,5]$ & \cite{Rawdon:2002wj, Jin:1997da} \\
$ 10_{129} $ & $[9,10]$ & $[4,5]$ & \cite{Rawdon:2002wj, Jin:1997da} \\
$ 10_{130} $ & $[9,10]$ & $[4,5]$ & \cite{Rawdon:2002wj, Jin:1997da} \\
$ 10_{131} $ & $[9,10]$ & $[4,5]$ & \cite{TomClay, Jin:1997da} \\
$ 10_{132} $ & $[9,10]$ & $[4,5]$ & \cite{Rawdon:2002wj, Jin:1997da} \\
$ 10_{133} $ & $[9,10]$ & $[4,5]$ & \cite{TomClay, Jin:1997da} \\
$ 10_{134} $ & $[9,10]$ & $[4,5]$ & \cite{Rawdon:2002wj, Jin:1997da} \\
$ 10_{135} $ & $[9,10]$ & $[4,5]$ & \cite{Rawdon:2002wj, Jin:1997da} \\
$ 10_{136} $ & $[9,10]$ & $[4,5]$ & \cite{Rawdon:2002wj, Jin:1997da} \\
$ 10_{137} $ & $[9,10]$ & $[4,5]$ & \cite{TomClay, Jin:1997da} \\
$ 10_{138} $ & $[9,10]$ & $[4,5]$ & \cite{TomClay, Jin:1997da} \\
$ 10_{139} $ & $[9,10]$ & $[4,5]$ & \cite{Rawdon:2002wj, Jin:1997da} \\
$ 10_{140} $ & $[9,10]$ & $[4,5]$ & \cite{Rawdon:2002wj, Jin:1997da} \\
$ 10_{141} $ & $[9,10]$ & $[4,5]$ & \cite{Rawdon:2002wj, Jin:1997da} \\
$ 10_{142} $ & $[9,10]$ & $[4,5]$ & \cite{TomClay, Jin:1997da} \\
$ 10_{143} $ & $[9,10]$ & $[4,5]$ & \cite{TomClay, Jin:1997da} \\
$ 10_{144} $ & $[9,10]$ & $[4,5]$ & \cite{Rawdon:2002wj, Jin:1997da} \\
$ 10_{145} $ & $[9,10]$ & $[4,5]$ & \cite{Rawdon:2002wj, Jin:1997da} \\
$ 10_{146} $ & $[9,10]$ & $[4,5]$ & \cite{Rawdon:2002wj, Jin:1997da} \\
$ 10_{147} $ & $[9,10]$ & $[4,5]$ & \cite{Scharein:1998tu, Jin:1997da} \\
$ 10_{148} $ & $[9,10]$ & $[4,5]$ & \cite{TomClay, Jin:1997da} \\
$ 10_{149} $ & $[9,10]$ & $[4,5]$ & \cite{TomClay, Jin:1997da} \\
$ 10_{150} $ & $[9,10]$ & $[4,5]$ & \cite{Rawdon:2002wj, Jin:1997da} \\
$ 10_{151} $ & $[9,10]$ & $[4,5]$ & \cite{Rawdon:2002wj, Jin:1997da} \\
$ 10_{152} $ & $[9,10]$ & $[4,5]$ & \Cref{thm:main}, \cite{Jin:1997da} \\
$ 10_{153} $ & $[9,10]$ & $[4,5]$ & \cite{TomClay, Jin:1997da} \\
$ 10_{154} $ & $[9,11]$ & $[4,5]$ & \cite{Rawdon:2002wj, Jin:1997da} \\
$ 10_{155} $ & $[9,10]$ & $[4,5]$ & \cite{Rawdon:2002wj, Jin:1997da} \\
$ 10_{156} $ & $[9,10]$ & $[4,5]$ & \cite{Rawdon:2002wj, Jin:1997da} \\
$ 10_{157} $ & $[9,10]$ & $[4,5]$ & \cite{Rawdon:2002wj, Jin:1997da} \\
$ 10_{158} $ & $[9,10]$ & $[4,5]$ & \cite{Rawdon:2002wj, Jin:1997da} \\
$ 10_{159} $ & $[9,10]$ & $[4,5]$ & \cite{Rawdon:2002wj, Jin:1997da} \\
$ 10_{160} $ & $[9,10]$ & $[4,5]$ & \cite{Rawdon:2002wj, Jin:1997da} \\
$ 10_{161} $ & $[9,10]$ & $[4,5]$ & \cite{Rawdon:2002wj, Jin:1997da} \\
$ 10_{162} $ & $[9,10]$ & $[4,5]$ & \cite{Rawdon:2002wj, Jin:1997da} \\
$ 10_{163} $ & $[9,10]$ & $[4,5]$ & \cite{Rawdon:2002wj, Jin:1997da} \\
$ 10_{164} $ & $[9,10]$ & $[4,5]$ & \cite{TomClay, Jin:1997da} \\
$ 10_{165} $ & $[9,10]$ & $[4,5]$ & \cite{Rawdon:2002wj, Jin:1997da} \\
\end{supertabular*}
\end{multicols*}

\end{center}

\newpage

\section{Knot Images and Coordinates} 
\label{sec:coords}

The knot coordinates are normalized so that the first vertex is at the origin, the second is at $(1,0,0)$, and the third is in the $xy$-plane with positive $y$-coordinate. Each knot is shown in orthographic perspective from the direction of the positive $z$-axis. These coordinates can also be downloaded from the {\tt stick-knot-gen} project~\cite{stick-knot-gen}.

\vspace{-.2in}
\begin{multicols*}{2}

\begin{coordinates}{$\boldmath{} $ $\boldmath{9_{18}} $}

\includegraphics[height=1.55in]{9_18.pdf}

\begin{tabular}{lll}

0. & 0. & 0. \\
0.9999999999999999 & 0. & 0. \\
0.1369811385433643 & 0.5051716983067165 & 0. \\
0.36445473772481973 & -0.46837424343037093 & 0.02154207517120799 \\
-0.40824166622547475 & -0.1946531623448116 & 0.5942697915353068 \\
0.1761863718304549 & 0.05044370765670344 & -0.17927490184029685 \\
-0.0358214851573142 & 0.11220243094991508 & 0.796039677499439 \\
0.36991848186843224 & 0.4169202041594481 & -0.065657576672912 \\
-0.2483842722893944 & -0.23075441407147784 & 0.3795606766593944 \\
0.6123146506430659 & -0.7330077964576398 & 0.29626059295858787\\

\end{tabular}
\end{coordinates}
\vspace{0.1in}

\begin{coordinates}{$\boldmath{} $ $\boldmath{10_{18}} $}

\includegraphics[height=1.55in]{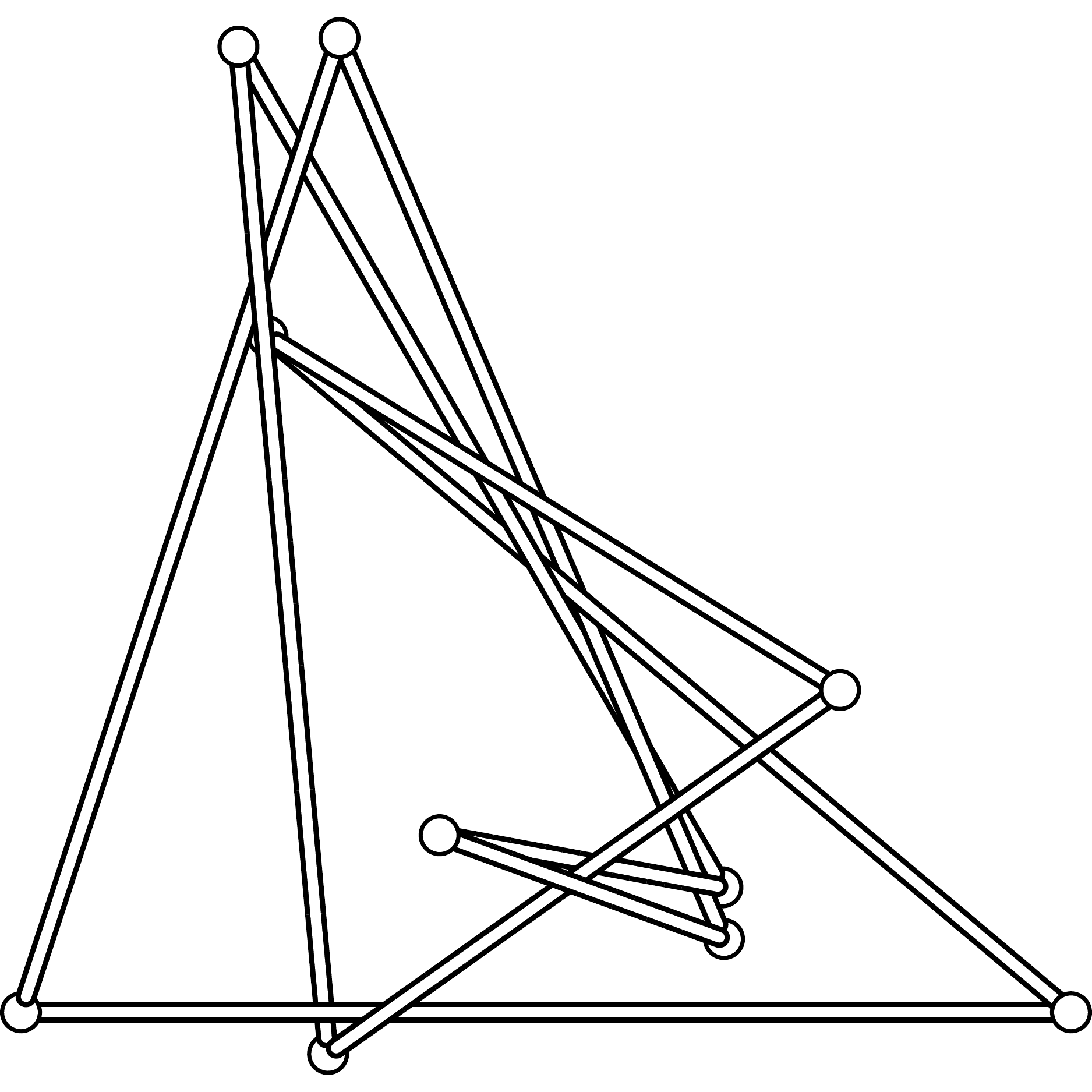}

\begin{tabular}{lll}

0. & 0. & 0. \\
0.9999999999999999 & 0. & 0. \\
0.10374904044780832 & 0.4435473114581754 & 0. \\
0.4061269411819969 & -0.1391324357591598 & 0.7543552991331917 \\
0.05220407842345304 & 0.41171347034152267 & -0.0014934919281650513 \\
1.0010147153854116 & 0.09608809195992474 & 0.01029615222744267 \\
0.06703161095626266 & -0.2530860092405562 & -0.06555243175225965 \\
0.007156648478157002 & 0.6766297479925333 & 0.29782560778886097 \\
0.6758185737598675 & 0.07818217741767258 & -0.14348124118541358 \\
0.5092073572964613 & 0.21798840253364535 & 0.8325796800522959\\

\end{tabular}
\end{coordinates}
\vspace{0.1in}

\begin{coordinates}{$\boldmath{} $ $\boldmath{10_{58}} $}

\includegraphics[height=1.55in]{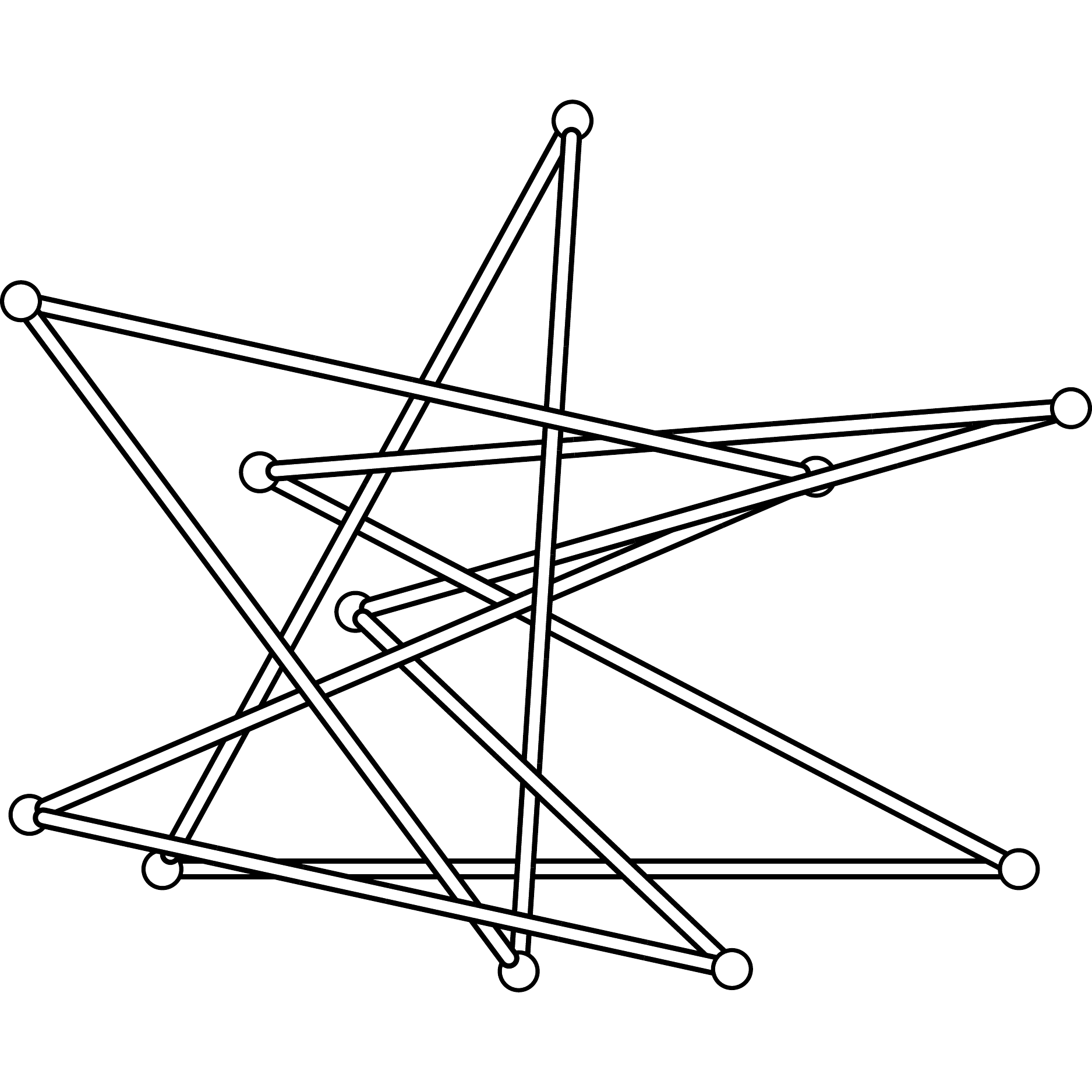}

\begin{tabular}{lll}

0. & 0. & 0. \\
0.9999999999999999 & 0. & 0. \\
0.23075419502569852 & 0.6389529650368944 & 0. \\
0.34568868227993477 & -0.34762407938349305 & 0.1159991338861572 \\
0.5854402278388353 & 0.6046839598660042 & 0.304754517340236 \\
-0.34288871608553806 & 0.39659016855951384 & -0.00330772619497593 \\
0.5747045141992497 & 0.1278600084591875 & 0.28962019119765003 \\
-0.19231721046439432 & -0.13517667062917393 & -0.2956057304327706 \\
0.3810463453997557 & 0.2597856620372011 & 0.4222096975022616 \\
0.13411123197649716 & 0.5946070064410464 & -0.487140464476194 \\
0.7640898592707 & 0.5766222162271076 & 0.2892637321079513\\

\end{tabular}
\end{coordinates}
\vspace{0.1in}

\begin{coordinates}{$\boldmath{} $ $\boldmath{10_{66}} $}

\includegraphics[height=1.55in]{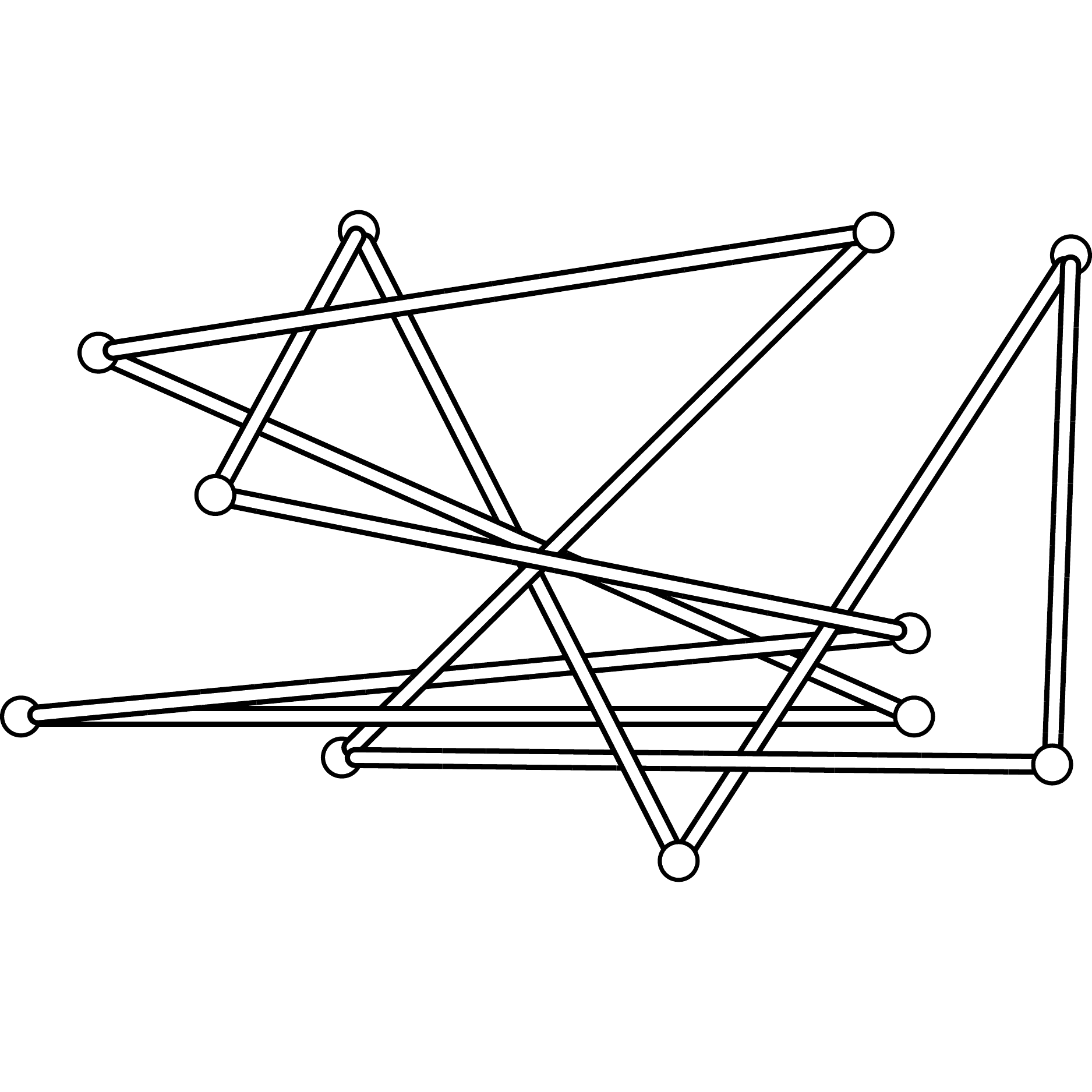}

\begin{tabular}{lll}

0. & 0. & 0. \\
0.9999999999999999 & 0. & 0. \\
0.5145101891023756 & 0.8742423253964475 & 0. \\
0.5275451799672034 & -0.12484964472859603 & -0.040562596623153806 \\
-0.13361019429763404 & 0.5339205870767882 & 0.318457392747638 \\
0.3118131683954475 & -0.3000760437290037 & 0.6441069709321883 \\
0.5621860061072727 & 0.3292462605214777 & -0.09160144923778166 \\
-0.24397323845573143 & -0.1795895608670905 & 0.21038096627250544 \\
0.5513339449909647 & 0.2862276622235819 & -0.17756335786457722 \\
-0.17814186103165766 & 0.12471725485537188 & 0.4871015477377872 \\
0.8009432011351053 & 0.32768567714920493 & 0.5011108515555394\\

\end{tabular}
\end{coordinates}
\vspace{0.1in}

\begin{coordinates}{$\boldmath{} $ $\boldmath{10_{68}} $}

\includegraphics[height=1.55in]{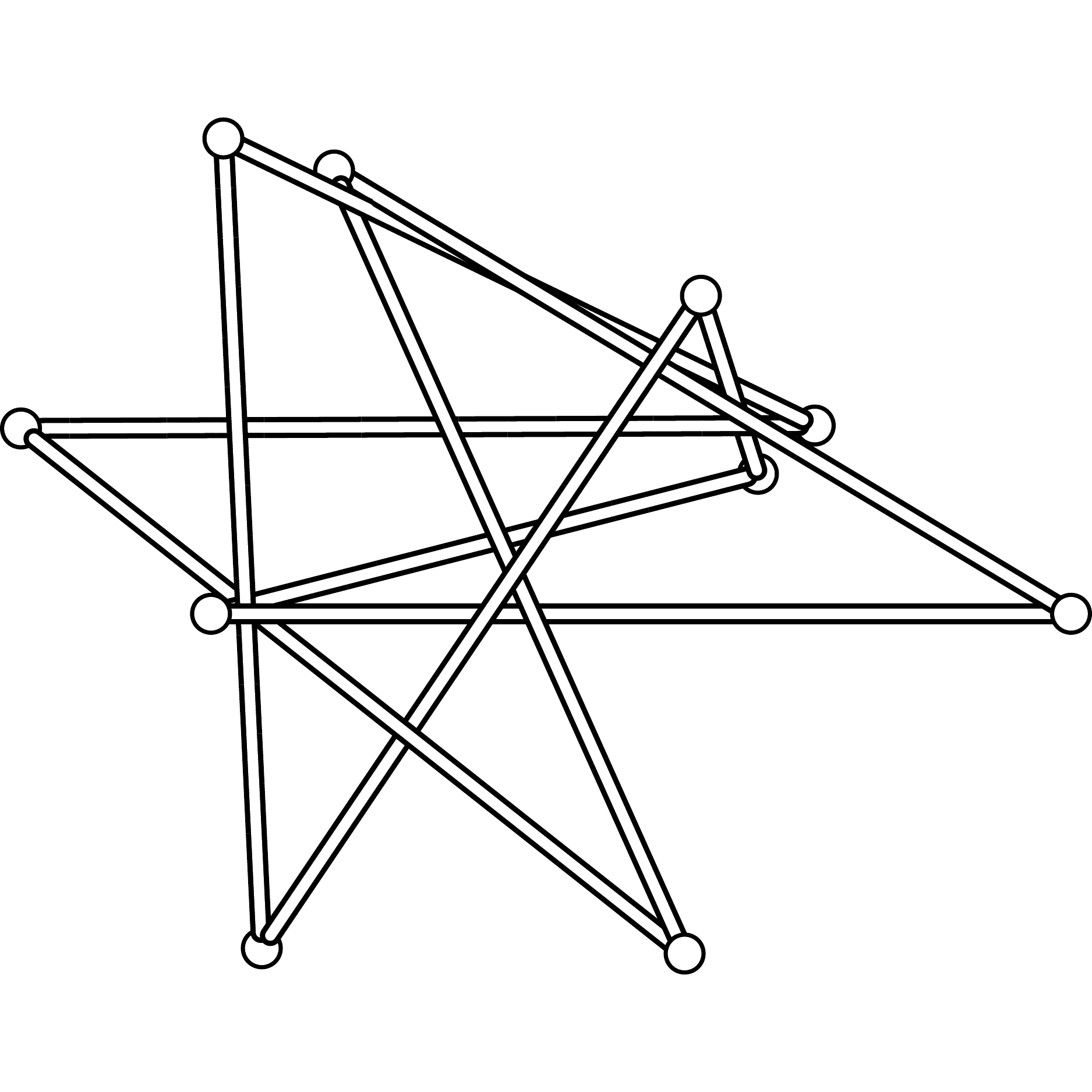}

\begin{tabular}{lll}

0. & 0. & 0. \\
0.9999999999999999 & 0. & 0. \\
0.14308910801988475 & 0.5154645702721413 & 0. \\
0.5507160581438794 & -0.3950793595184551 & 0.0689203994047325 \\
-0.220985520662934 & 0.2155209342031813 & -0.10896707787830442 \\
0.7019924653376205 & 0.21890712528211487 & -0.4938049282365256 \\
0.014394765753044353 & 0.5527134207336278 & 0.1510072619731009 \\
0.059021448275970674 & -0.38897111542175017 & -0.18251739014848975 \\
0.5697203717471668 & 0.36984460712635464 & 0.2216920961072869 \\
0.6359432500426695 & 0.16273761313218546 & -0.7543822983058428\\

\end{tabular}
\end{coordinates}
\vspace{0.1in}

\begin{coordinates}{$\boldmath{} $ $\boldmath{10_{79}} $}

\includegraphics[height=1.55in]{10_79.pdf}

\begin{tabular}{lll}

0. & 0. & 0. \\ 
0.9999999999999999 & 0. & 0. \\
0.15662097603563974 & 0.537319106245928 & 0. \\
0.2445856042154266 & -0.43538593154418503 & -0.21472571724627276 \\
0.6450217925958391 & 0.1626522315261948 & 0.4795373557647955 \\
0.734006748172326 & 0.23463544410470605 & -0.5138910990259495 \\
0.36493154829639873 & 0.47001886661877124 & 0.3852074168454255 \\
0.021330163575223243 & 0.10403072835102278 & -0.47965718010891734 \\
0.3179757846004717 & -0.018939867230503504 & 0.46738009906697064 \\
0.4835968181650272 & -0.48616249117625676 & -0.401108641743325 \\
0.7849150964041073 & 0.4673440590915645 & -0.40681423508617587\\

\end{tabular}
\end{coordinates}
\vspace{0.1in}

\begin{coordinates}{$\boldmath{} $ $\boldmath{10_{80}} $}

\includegraphics[height=1.55in]{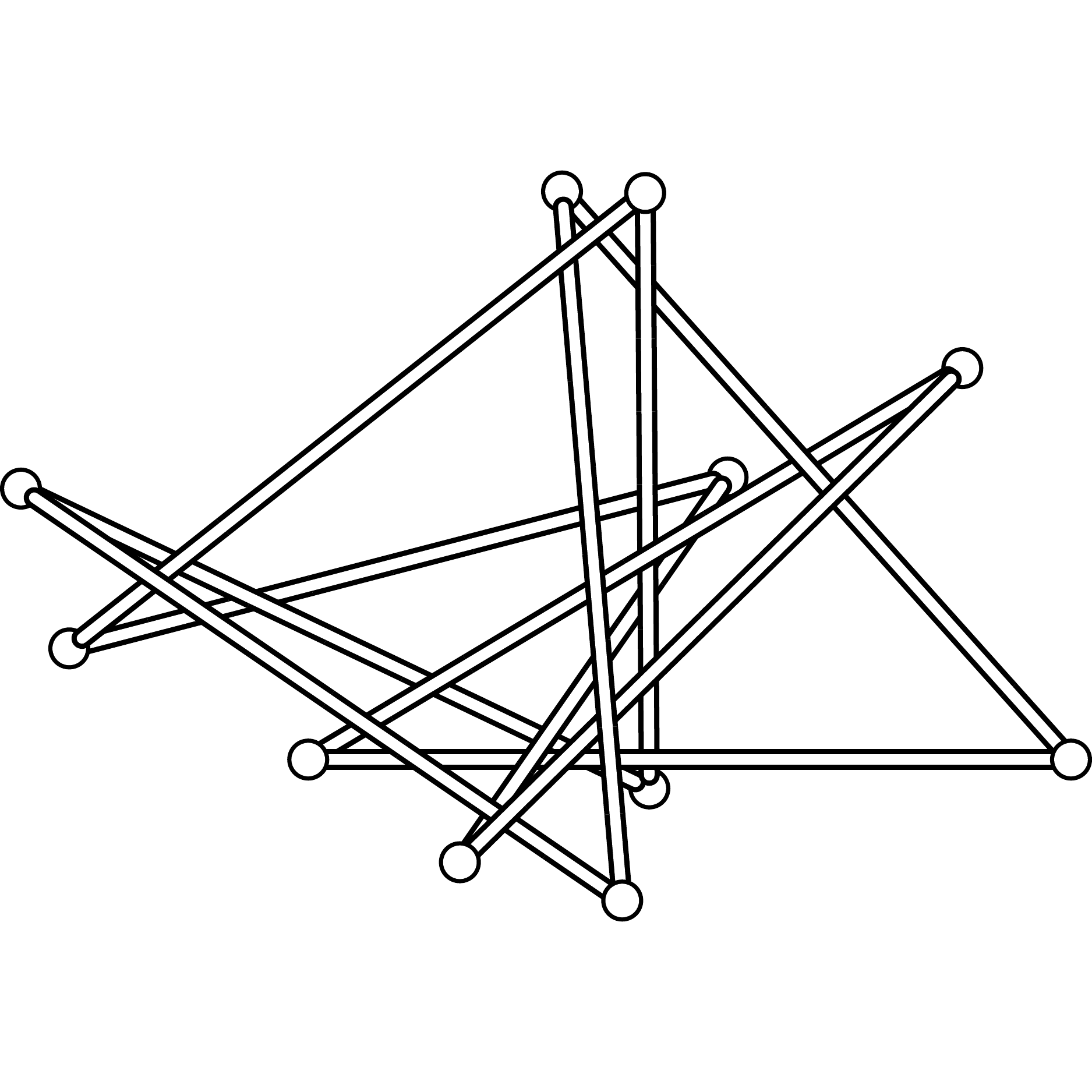}

\begin{tabular}{lll}

0. & 0. & 0. \\
0.9999999999999999 & 0. & 0. \\
0.3327312868213165 & 0.7448170677507757 & 0. \\
0.41164114869381774 & -0.18477978499933626 & 0.36003183894809804 \\
-0.37657987371245755 & 0.35524645381103787 & 0.06493985119999454 \\
0.4472049020942717 & -0.03782867814303159 & -0.3435581093681675 \\
0.44180933980825615 & 0.7428150212139177 & 0.2813949309171927 \\
-0.3132726693922105 & 0.1457679277099991 & 0.010496559714429563 \\
0.5497691607609388 & 0.36988149308037604 & -0.4421974972632168 \\
0.19881030933734742 & -0.13460231801890482 & 0.3466776316045772 \\
0.8574892644307307 & 0.5132409345720769 & -0.03599867310924573\\

\end{tabular}
\end{coordinates}
\vspace{0.1in}

\begin{coordinates}{$\boldmath{} $ $\boldmath{10_{82}} $}

\includegraphics[height=1.55in]{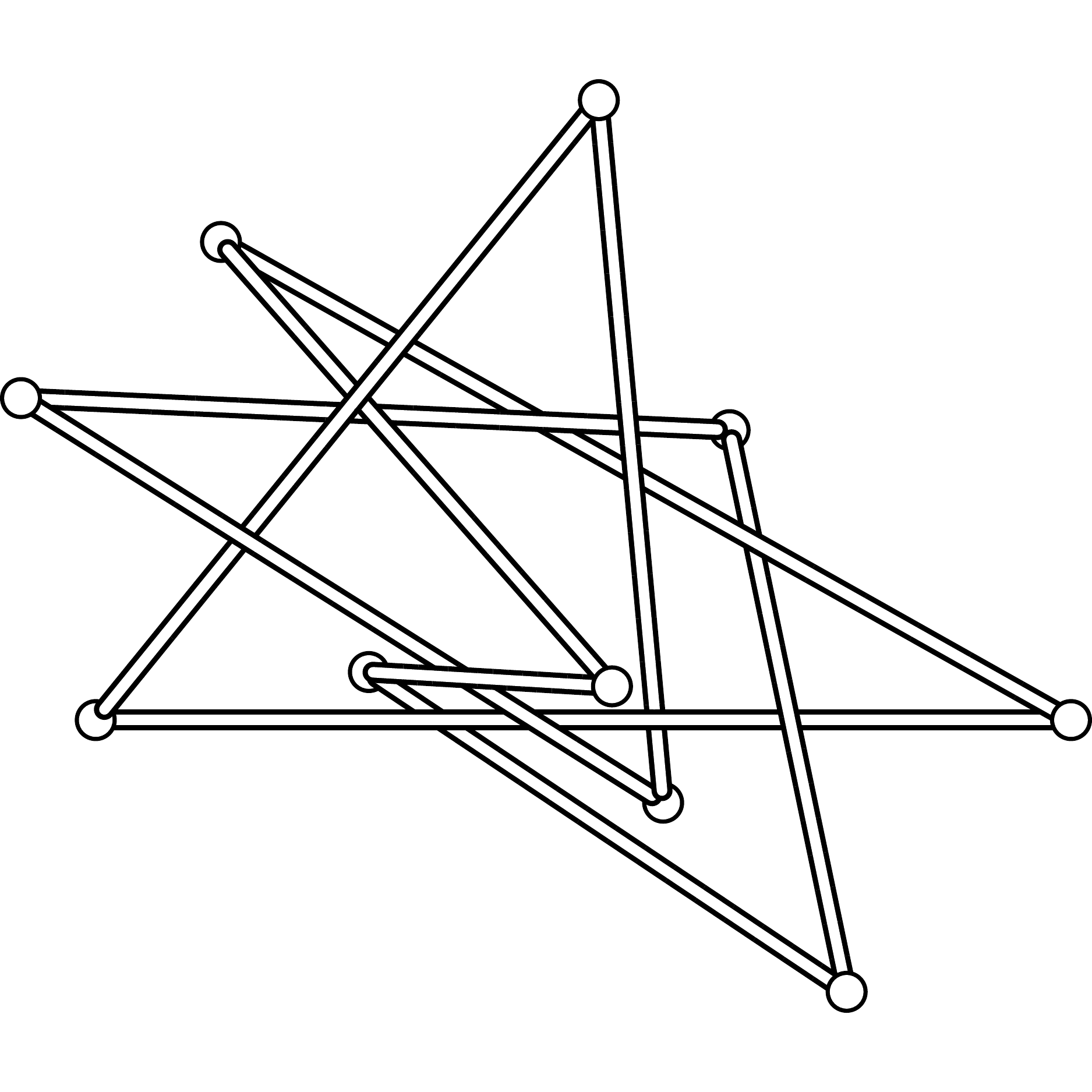}

\begin{tabular}{lll}

0. & 0. & 0. \\
0.9999999999999999 & 0. & 0. \\
0.09089527821440953 & 0.41656764736011836 & 0. \\
0.9122855066438609 & 0.04664743439303293 & -0.4341395267411148 \\
0.4293464164746975 & -0.5569166222908565 & 0.20027379646849952 \\
0.8672055548188717 & 0.28186943587667174 & -0.12332676532377572 \\
0.2256114315412598 & -0.4239913296650354 & -0.4235226357892222 \\
0.901013770664066 & -0.17005456734157295 & 0.2688268431938411 \\
0.651739366157114 & 0.46199768313726536 & -0.4649116823875408 \\
0.8761992708919832 & -0.45907319072845804 & -0.14671960756064373\\

\end{tabular}
\end{coordinates}
\vspace{0.1in}

\begin{coordinates}{$\boldmath{} $ $\boldmath{10_{84}} $}

\includegraphics[height=1.55in]{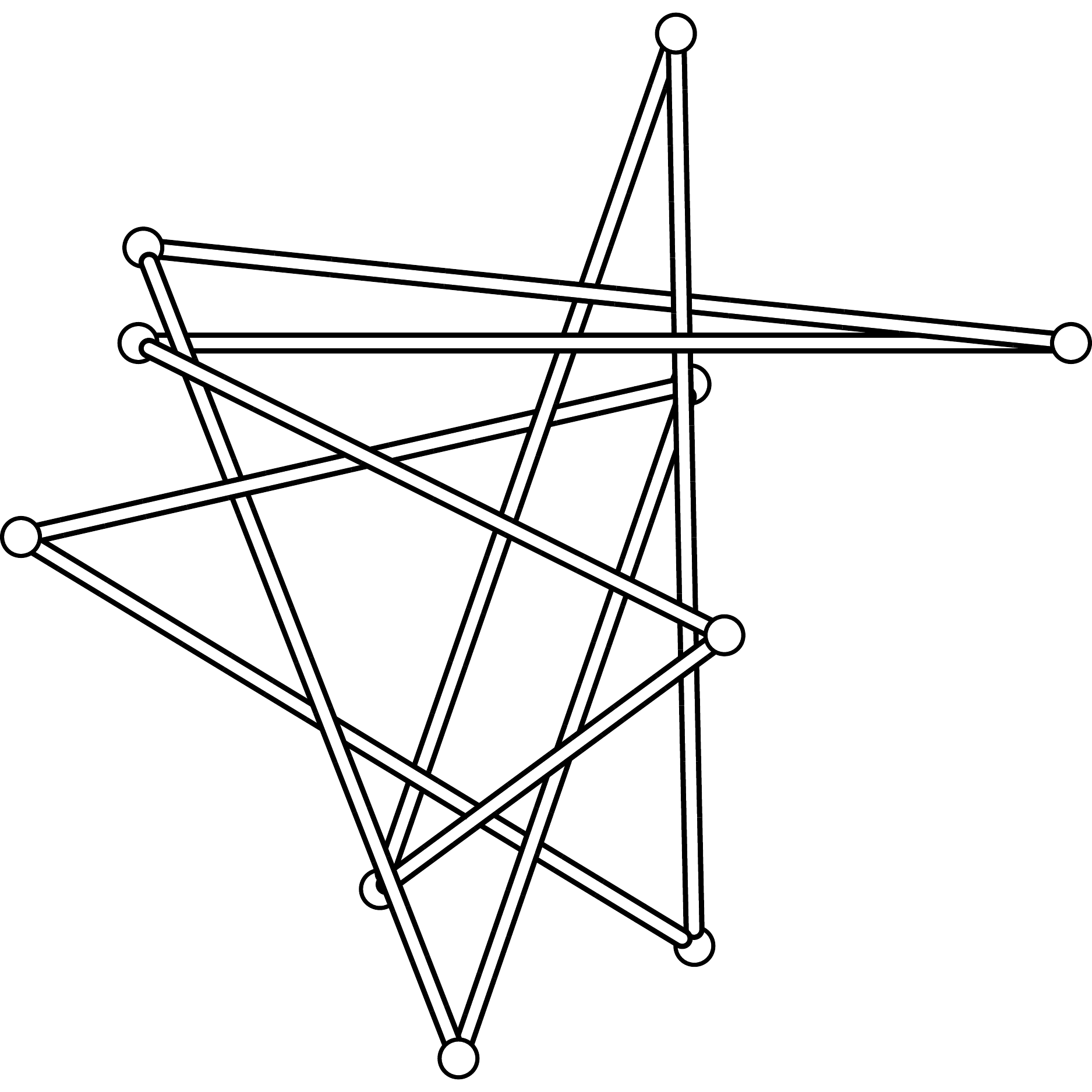}

\begin{tabular}{lll}

0. & 0. & 0. \\
0.9999999999999999 & 0. & 0. \\
0.19130320128681053 & 0.5882257115691553 & 0. \\
0.18504701394830536 & -0.36160278132829377 & 0.3127086410704792 \\
0.4007142236869289 & 0.4473066686279016 & -0.23424031122317077 \\
0.6920922472558773 & 0.29033346909917157 & 0.7094006491058975 \\
0.6101485945447912 & -0.17959763305314008 & -0.1694906962891281 \\
-0.10778017173660057 & 0.06099088207608897 & 0.483728602976893 \\
0.5896371143947019 & -0.27762434838744 & -0.14789537907522962 \\
0.7331230152034498 & 0.6547968310753483 & 0.1837709296724706\\

\end{tabular}
\end{coordinates}
\vspace{0.1in}

\begin{coordinates}{$\boldmath{} $ $\boldmath{10_{93}} $}

\includegraphics[height=1.55in]{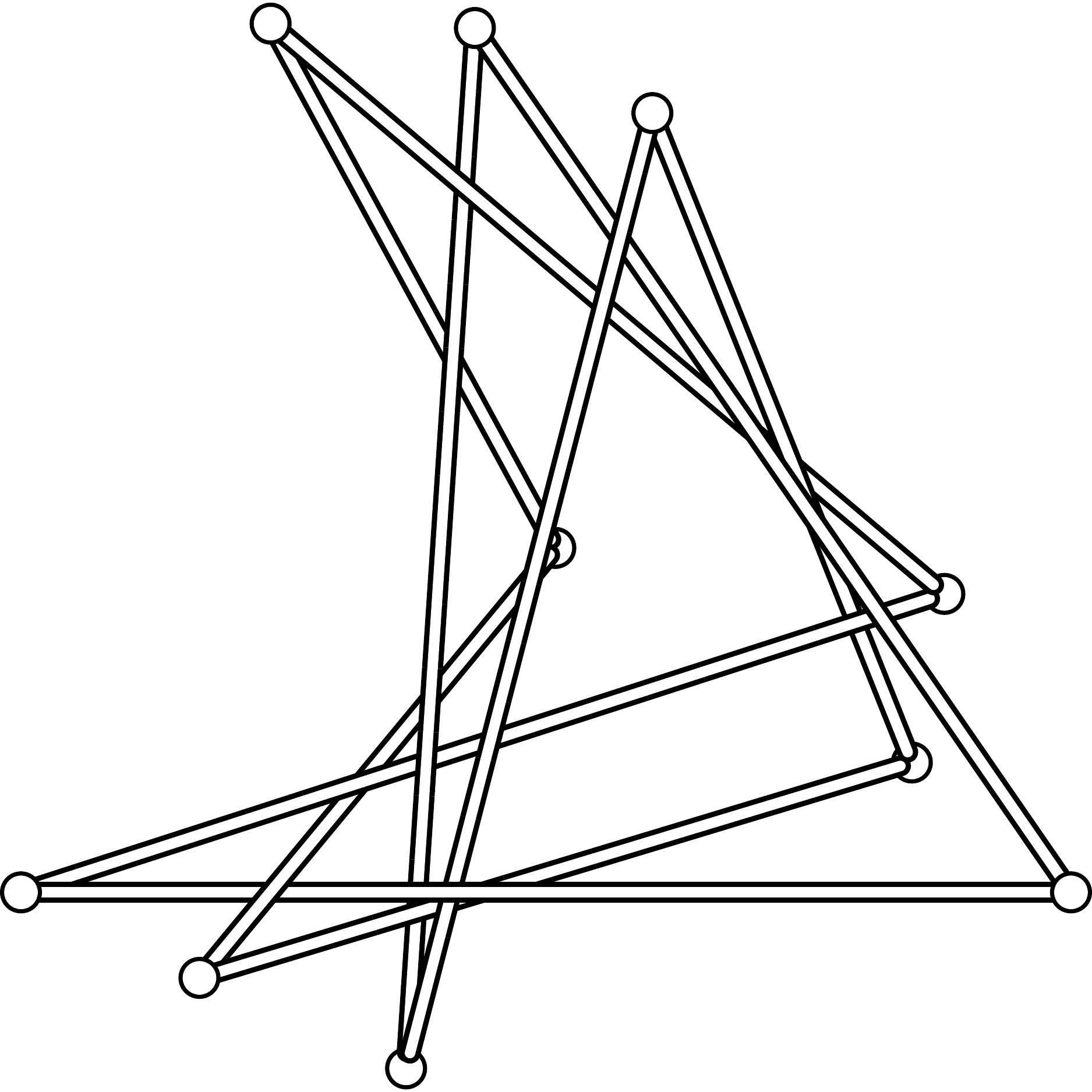}

\begin{tabular}{lll}

0. & 0. & 0. \\
0.9999999999999999 & 0. & 0. \\
0.4322831520544033 & 0.8232238945503928 & 0. \\
0.366749435669193 & -0.16760413688857084 & -0.11817421094101156 \\
0.6013038771595937 & 0.7421884288832536 & 0.22426074820224834 \\
0.8484129178261877 & 0.12369596771944935 & -0.5216643133852914 \\
0.1699818416631752 & -0.08145086977191296 & 0.18377601007487826 \\
0.5088788663970447 & 0.3280505164135769 & -0.6632525734680226 \\
0.23773536251973704 & 0.8273614792778586 & 0.1596506788458079 \\
0.8793957949879448 & 0.28424997300173205 & -0.3819227521450315\\

\end{tabular}
\end{coordinates}
\vspace{0.1in}

\begin{coordinates}{$\boldmath{} $ $\boldmath{10_{100}} $}

\includegraphics[height=1.55in]{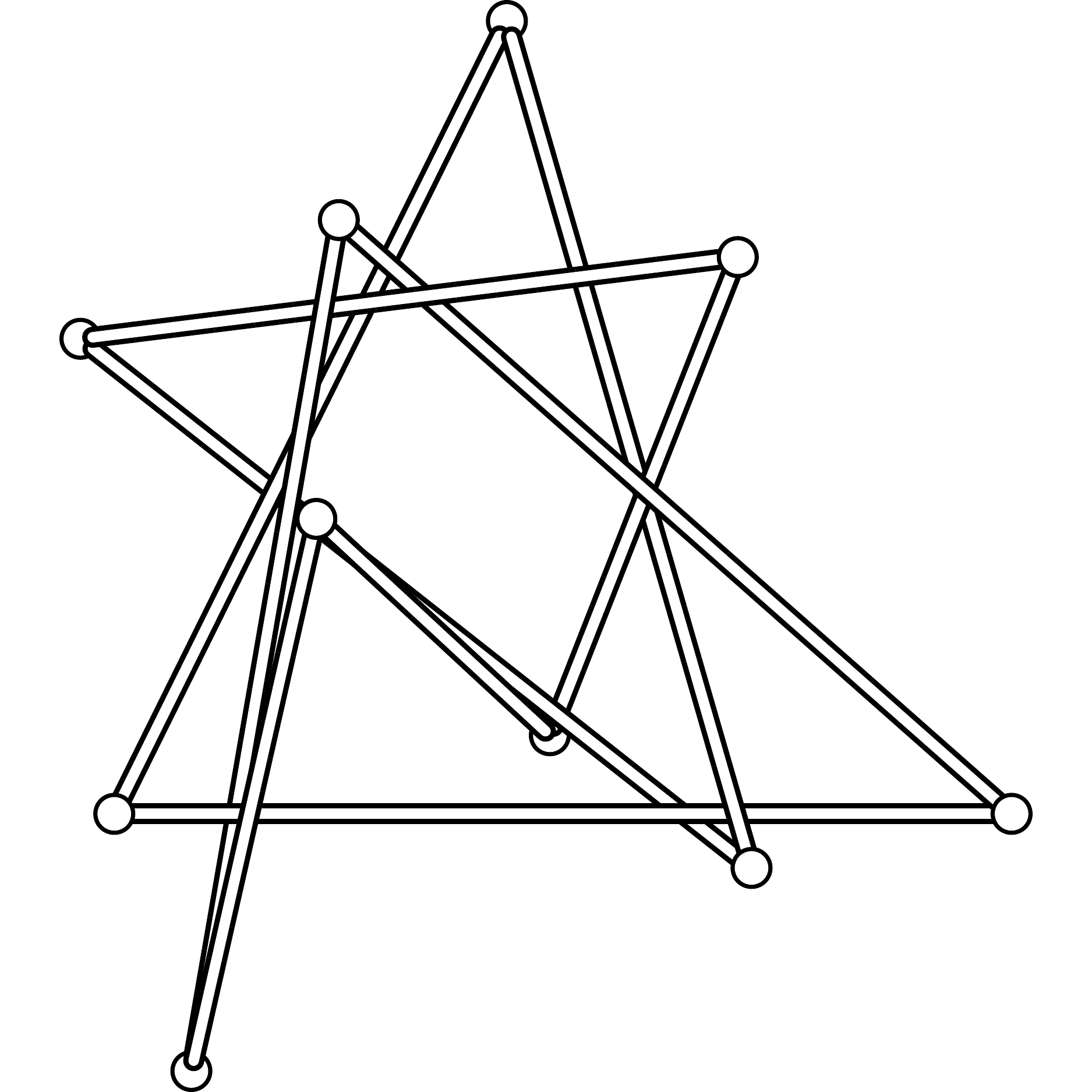}

\begin{tabular}{lll}

0. & 0. & 0. \\
0.9999999999999999 & 0. & 0. \\
0.25025317408203634 & 0.6617247894902673 & 0. \\
0.08590073137649616 & -0.2865914931622944 & -0.2714488950663621 \\
0.225196070444289 & 0.328672653046371 & 0.5044678781149572 \\
0.4853999222371724 & 0.08756968372166599 & -0.43049913397421674 \\
0.6948680476379834 & 0.6204316823737933 & 0.3893674373857395 \\
-0.03774921617740326 & 0.5293732483746156 & -0.28515484271868646 \\
0.7101850493261617 & -0.060296929991126846 & 0.01961296271974253 \\
0.4375512255320434 & 0.8835485041661305 & -0.16700588558850374\\

\end{tabular}
\end{coordinates}
\vspace{0.1in}

\begin{coordinates}{$\boldmath{} $ $\boldmath{10_{152}} $}

\includegraphics[height=1.55in]{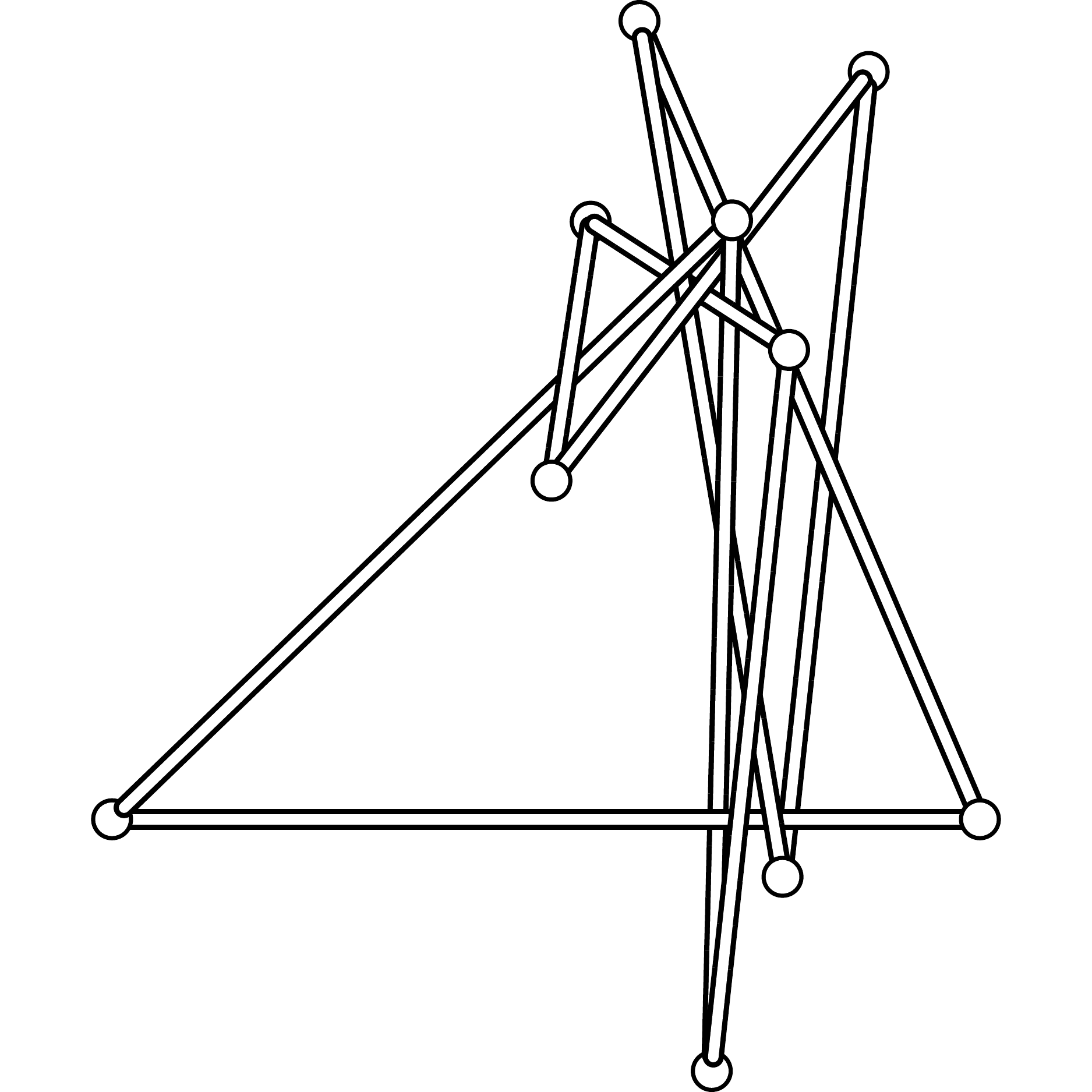}

\begin{tabular}{lll}

0. & 0. & 0. \\
0.9999999999999999 & 0. & 0. \\
0.23742406673458422 & 0.6468987138682376 & 0. \\
0.8624816316358974 & -0.04229570782933964 & -0.36648886703497885 \\
0.4011057049003684 & 0.5864895424110548 & 0.2594195634155591 \\
-0.1801631109472681 & -0.1253414478506597 & -0.13481793483405285 \\
0.6601561286258801 & 0.38286488530589013 & 0.0538349170998087 \\
-0.008962871699621913 & 0.35627600812192406 & -0.6888445528019091 \\
0.09031617352847202 & -0.09176640719170803 & 0.1996382355842952 \\
0.9044534650000824 & 0.4199643332833877 & -0.07479230187114154\\

\end{tabular}
\end{coordinates}

\end{multicols*}

\section{A 5-Superbridge Realization of $10_{37}$}
\label{sec:10_37}

Since $10_{37}$ is a 2-bridge knot and Adams et al.~\cite{Adams:2020vm} showed that $\superbridge[K] \leq 3\bridge[K]-1$, it follows that $\superbridge[10_{37}] \leq 5$, even though the best known bound on stick number is $\stick[10_{37}] \leq 12$~\cite{Rawdon:2002wj}. Adams et al.'s proof is based on putting the knot in a standard configuration that has superbridge number $3\bridge[K]-1$, so one could in principle follow their procedure to find a concrete realization of $10_{37}$ with superbridge number equal to 5. 

Rather than doing that, here is a $12$-stick realization of $10_{37}$ with superbridge number 5:

\medskip

\begin{tabular*}{0.85\textwidth}{C{2.2in} R{.4in} R{.4in} R{.4in} | R{1in}}
\multicolumn{5}{c}{$10_{37}$} \\
\cline{1-5}\noalign{\smallskip}
\multirow{12}{*}{\includegraphics[height=1.8in]{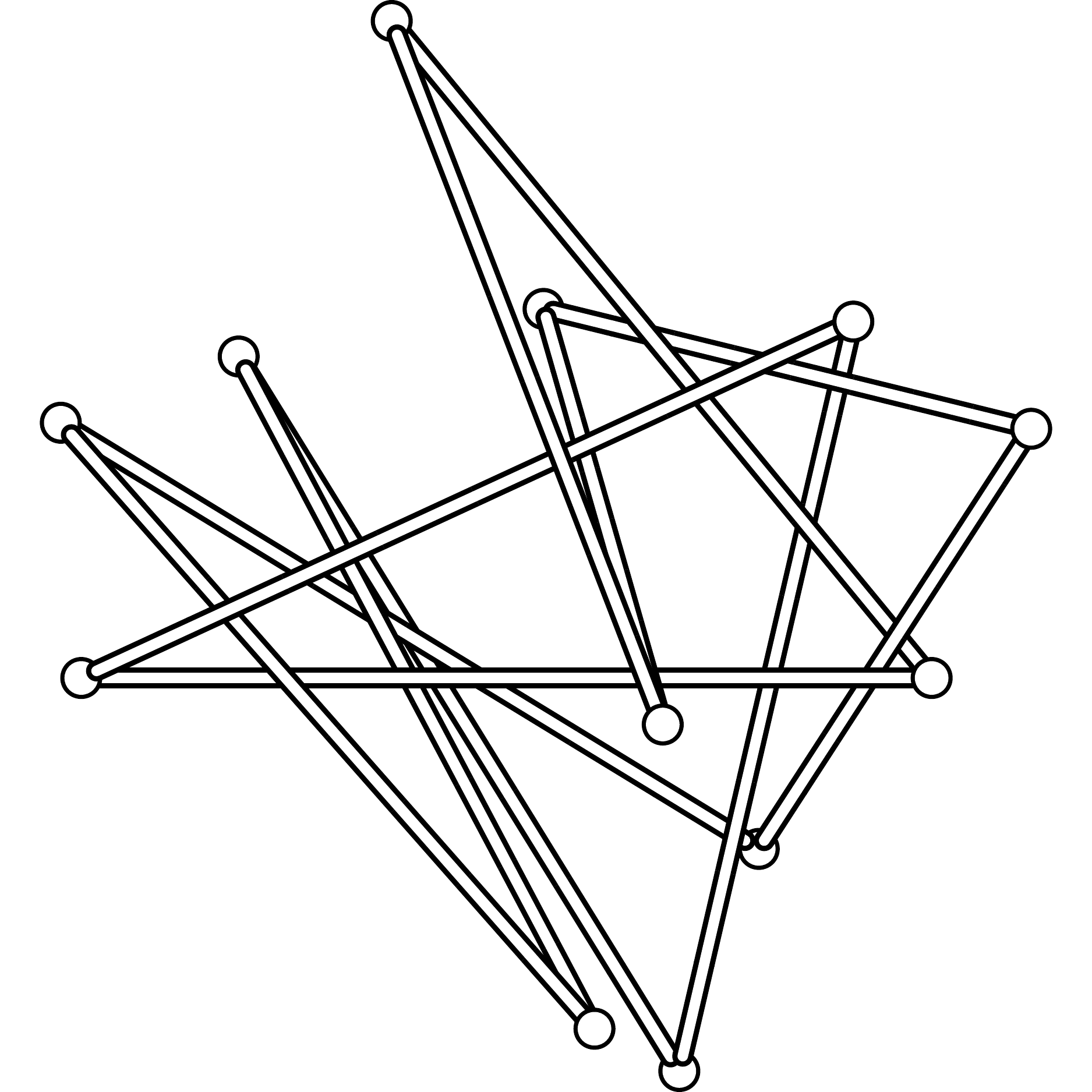}} & $0$ & $0$ & $0$ & $1$ \\
& $1000$ & $0$ & $0$ & $1$ \\
& $156$ & $536$ & $0$ & $1$ \\
& $655$ & $-107$ & $-580$ & $1$ \\
& $779$ & $-509$ & $327$ & $1$ \\
& $73$ & $-290$ & $-347$ & $20889658716$ \\
& $903$ & $-312$ & $211$ & $1$ \\
& $557$ & $-322$ & $-728$ & $1$ \\
& $592$ & $362$ & $1$ & $637602926$ \\
& $521$ & $-622$ & $162$ & $1683870031$ \\
& $381$ & $227$ & $-347$ & $116253911$ \\
& $847$ & $80$ & $525$ & $20181547124$ \\
\end{tabular*}

\medskip

The knot is shown in orthographic perspective from the direction of the positive $z$-axis relative to the vertex coordinates given in the three columns to the right of the image. These coordinates have been rounded to three significant digits and expressed as integers so that they are amenable to exact arithmetic. The last column gives the coordinates of a vector certifying that this specific $10_{37}$ has superbridge number $\leq 5$, using the approach from~\cite{Shonkwiler:2020gi}.

Specifically, a 12-stick realization of a knot $K$ has superbridge number equal to 6 only if there exists a direction $\vec{v} \in S^2$ so that the projection of each of the 12 vertices to the line spanned by $\vec{v}$ is either a local minimum or a local maximum. If $\vec{e}_i$ is the $i$th edge vector, this means we can find $\vec{v}$ so that $\vec{v} \cdot \vec{e}_1, \dots , \vec{v} \cdot \vec{e}_{12}$ alternates signs, or (after possibly replacing $\vec{v}$ with $-\vec{v}$) equivalently
\[
	\vec{v}^T \begin{bmatrix} \vec{e}_1 & -\vec{e}_2 & \cdots & \vec{e}_{11} & -\vec{e}_{12} \end{bmatrix}
\]
has all positive entries. By Gordan's theorem~\cite{Gordan:1873dz}, either such a $\vec{v}$ exists, or there exists $\vec{u} \in \R^{12}$ such that
\[
	\begin{bmatrix} \vec{e}_1 & -\vec{e}_2 & \cdots & \vec{e}_{11} & -\vec{e}_{12} \end{bmatrix} \vec{u} = \vec{0},
\]
but not both. The last column above gives such a $\vec{u}$ (found using \emph{Mathematica}'s {\tt FindInstance} function), which provides a certificate that the superbridge number of this realization of $10_{37}$ is strictly less than 6.

\clearpage

\bibliography{stickknots-special,stickknots}

\begin{thebibliography}{10}

\bibitem{Adams:2020vm}
Colin Adams, Nikhil Agarwal, Rachel Allen, Tirasan Khandhawit, Alex Simons,
  Rebecca Winarski, and Mary Wootters.
\newblock {Superbridge and bridge indices for knots}.
\newblock {\em Journal of Knot Theory and Its Ramifications}, 30(2):2150009,
  2021.

\bibitem{Adams:1997gb}
Colin Adams, Bevin~M Brennan, Deborah~L Greilsheimer, and Alexander~K Woo.
\newblock {Stick numbers and composition of knots and links}.
\newblock {\em Journal of Knot Theory and Its Ramifications}, 6(2):149--161,
  1997.

\bibitem{Calvo:1998uj}
Jorge~Alberto Calvo.
\newblock {\em {Geometric Knot Theory: The Classification of Spatial Polygons
  with a Small Number of Edges}}.
\newblock PhD thesis, University of California, Santa Barbara, 1998.

\bibitem{Calvo:2001gv}
Jorge~Alberto Calvo.
\newblock {Geometric knot spaces and polygonal isotopy}.
\newblock {\em Journal of Knot Theory and Its Ramifications}, 10(2):245--267,
  2001.

\bibitem{Calvo:2002iq}
Jorge~Alberto Calvo.
\newblock {Characterizing polygons in $\Bbb R^3$}.
\newblock In Jorge~Alberto Calvo, Kenneth~C Millett, and Eric~J Rawdon,
  editors, {\em Physical Knots: Knotting, Linking, and Folding Geometric
  Objects in $\Bbb R^3$}, volume 304 of {\em Contemporary Mathematics}, pages
  37--53. American Mathematical Society, Providence, RI, 2002.

\bibitem{Calvo:1998kr}
Jorge~Alberto Calvo and Kenneth~C Millett.
\newblock {Minimal edge piecewise linear knots}.
\newblock In Andrzej Stasiak, Vsevolod Katritch, and Louis~H Kauffman, editors,
  {\em Ideal Knots}, volume~19 of {\em Series on Knots and Everything}, pages
  107--128. World Scientific Publishing, Singapore, 1998.

\bibitem{stick-knot-gen}
Thomas~D Eddy.
\newblock stick-knot-gen, efficiently generate and classify random stick knots
  in confinement.
\newblock \url{https://github.com/thomaseddy/stick-knot-gen}.

\bibitem{Eddy:2019uj}
Thomas~D Eddy.
\newblock {Improved Stick Number Upper Bounds}.
\newblock Master's thesis, Colorado State University, 2019.
\newblock \url{https://hdl.handle.net/10217/195411}.

\bibitem{TomClay}
Thomas~D Eddy and Clayton Shonkwiler.
\newblock New stick number bounds from random sampling of confined polygons.
\newblock {\em Experimental Mathematics}, 2021.
\newblock DOI:10.1080/10586458.2021.1926000.

\bibitem{Gordan:1873dz}
Paul Gordan.
\newblock {{\"U}ber die Aufl{\"o}sung linearer Gleichungen mit reellen
  Coefficienten}.
\newblock {\em Mathematische Annalen}, 6(1):23--28, 1873.

\bibitem{Huh:2011gp}
Youngsik Huh, Sungjong No, and Seungsang Oh.
\newblock {Stick numbers of 2-bridge knots and links}.
\newblock {\em Proceedings of the American Mathematical Society},
  139(11):4143--4152, 2011.

\bibitem{Huh:2011co}
Youngsik Huh and Seungsang Oh.
\newblock {An upper bound on stick number of knots}.
\newblock {\em Journal of Knot Theory and Its Ramifications}, 20(5):741--747,
  2011.

\bibitem{Jeon:2002gm}
Choon~Bae Jeon and Gyo~Taek Jin.
\newblock {A computation of superbridge index of knots}.
\newblock {\em Journal of Knot Theory and Its Ramifications}, 11(3):461--473,
  2002.

\bibitem{Jin:1997da}
Gyo~Taek Jin.
\newblock {Polygon indices and superbridge indices of torus knots and links}.
\newblock {\em Journal of Knot Theory and Its Ramifications}, 6(2):281--289,
  1997.

\bibitem{Kuiper:1987ki}
Nicolaas~H Kuiper.
\newblock {A new knot invariant}.
\newblock {\em Mathematische Annalen}, 278(1--4):193--209, 1987.

\bibitem{knotinfo}
Charles Livingston and Allison~H Moore.
\newblock {KnotInfo: Table of Knot Invariants}.
\newblock \url{https://knotinfo.math.indiana.edu}.

\bibitem{Meissen:1998wu}
Monica Meissen.
\newblock {Edge number results for piecewise-linear knots}.
\newblock In Vaughan F~R Jones, Joanna Kania-Bartoszy{\'{n}}ska, J{\'o}zef~H
  Przytycki, Pawe{\l} Traczyk, and Vladimir~G Turaev, editors, {\em Knot
  Theory: Papers from the Mini-Semester Held in Warsaw, July 13--August 17,
  1995}, volume~42 of {\em Banach Center Publications}, pages 235--242. Polish
  Academy of Sciences, Institute of Mathematics, Warsaw, 1998.

\bibitem{Millett:1994fo}
Kenneth~C Millett.
\newblock {Knotting of regular polygons in $3$-space}.
\newblock {\em Journal of Knot Theory and Its Ramifications}, 3(3):263--278,
  1994.

\bibitem{Millett:2000fe}
Kenneth~C Millett.
\newblock {Monte Carlo explorations of polygonal knot spaces}.
\newblock In Cameron~McA Gordon, Vaughan F~R Jones, Louis~H Kauffman, Sofia
  Lambropoulou, and J{\'o}zef~H Przytycki, editors, {\em Knots in Hellas '98:
  Proceedings of the International Conference on Knot Theory and Its
  Ramifications}, volume~24 of {\em Series on Knots and Everything}, pages
  306--334. World Scientific Publishing, Singapore, 2000.

\bibitem{Millett:2012dd}
Kenneth~C Millett.
\newblock {Physical knot theory: An introduction to the study of the influence
  of knotting on the spatial characteristics of polymers}.
\newblock In Louis~H Kauffman, Sofia Lambropoulou, Slavik Jablan, and
  J{\'o}zef~H Przytycki, editors, {\em Introductory Lectures on Knot Theory},
  volume~46 of {\em Series on Knots and Everything}, pages 346--378. World
  Scientific Publishing, Singapore, 2012.

\bibitem{Millett:2003kl}
Kenneth~C Millett and Eric~J Rawdon.
\newblock {Energy, ropelength, and other physical aspects of equilateral
  knots}.
\newblock {\em Journal of Computational Physics}, 186(2):426--456, 2003.

\bibitem{murasugi_braid_1991}
Kunio Murasugi.
\newblock On the braid index of alternating links.
\newblock {\em Transactions of the American Mathematical Society},
  326(1):237--260, 1991.

\bibitem{Negami:1991gb}
Seiya Negami.
\newblock {Ramsey theorems for knots, links and spatial graphs}.
\newblock {\em Transactions of the American Mathematical Society},
  324(2):527--541, 1991.

\bibitem{Randell:1994bx}
Richard Randell.
\newblock {An elementary invariant of knots}.
\newblock {\em Journal of Knot Theory and Its Ramifications}, 3(3):279--286,
  1994.

\bibitem{Rawdon:2002wj}
Eric~J Rawdon and Robert~G Scharein.
\newblock {Upper bounds for equilateral stick numbers}.
\newblock In Jorge~Alberto Calvo, Kenneth~C Millett, and Eric~J Rawdon,
  editors, {\em Physical Knots: Knotting, Linking, and Folding Geometric
  Objects in $\Bbb R^3$}, volume 304 of {\em Contemporary Mathematics}, pages
  55--75. American Mathematical Society, Providence, RI, 2002.

\bibitem{Scharein:1998tu}
Robert~G Scharein.
\newblock {\em {Interactive Topological Drawing}}.
\newblock PhD thesis, University of British Columbia, 1998.

\bibitem{knotplot}
Robert~G Scharein.
\newblock {KnotPlot}.
\newblock \url{https://knotplot.com}, 1998--2020.

\bibitem{Shonkwiler:2020gi}
Clayton Shonkwiler.
\newblock {New computations of the superbridge index}.
\newblock {\em Journal of Knot Theory and Its Ramifications}, 29(14):2050096,
  2020.

\end{thebibliography}

\end{document}